\documentclass{amsart}
\usepackage{amssymb}

\textwidth	500pt
\textheight	680pt
\oddsidemargin	-15pt
\evensidemargin -15pt
\topmargin	-10pt

\newcommand{\Sym}{S}
\newcommand{\FSym}{S_{\w}}
\newcommand{\id}{\mathrm{id}}
\newcommand{\supp}{\mathrm{supt}}
\newcommand{\w}{\omega}
\newcommand{\IN}{\mathbb N}
\newcommand{\IZ}{\mathbb Z}
\newcommand{\A}{\mathcal A}
\newcommand{\e}{\varepsilon}

\newcommand{\M}{\mathfrak M_G}
\newcommand{\Zeta}{\mathfrak Z_G}
\newcommand{\C}{\mathfrak T_G}
\newcommand{\Tau}{\mathcal T}
\newcommand{\Zetap}{\mathfrak Z_G''}
\newcommand{\Zetapp}{\mathfrak Z_G'}
\newcommand{\Homeo}{\mathcal{H}}
\newcommand{\U}{\mathcal U}
\newcommand{\II}{\mathbb I}

\input xy
\xyoption{all}

\newtheorem{theorem}{Theorem}[section]
\newtheorem{conjecture}[theorem]{Conjecture}

\newtheorem{claim}[theorem]{Claim}
\newtheorem{lemma}[theorem]{Lemma}
\newtheorem{proposition}[theorem]{Proposition}
\newtheorem{problem}[theorem]{Problem}
\newtheorem{corollary}[theorem]{Corollary}
\theoremstyle{definition}
\newtheorem{remark}[theorem]{Remark}

\begin{document}

\title{Algebraically determined topologies on permutation groups}
\author{Taras Banakh, Igor Guran, Igor Protasov}
\address{T.Banakh: Ivan Franko National University of Lviv (Ukraine), and Jan Kochanowski University in Kielce (Poland)}
\email{t.o.banakh@gmail.com}
\address{I.Guran: Ivan Franko National University of Lviv (Ukraine)}
\email{igor\_guran@yahoo.com}
\address{I.Protasov: Faculty of Cybernetics, Kyiv University, (Ukraine)}
\email{i.v.protasov@gmail.com}
\subjclass{20B30, 20B35, 22A05, 54H15}
\keywords{Symmetric group, topological group, semi-topological group, [semi]-topological group,  topology of pointwise convergence, centralizer topology}
\dedicatory{Dedicated to Dikran Dikranjan on the occasion of his 60th birthday}

\begin{abstract} In this paper we answer several questions of Dikran Dikranjan about algebraically determined topologies on the groups $\Sym(X)$ (and $\FSym(X)$~) of (finitely supported) bijections of a set $X$. In particular, confirming conjecture of Dikranjan, we prove that the topology $\Tau_p$ of pointwise convergence on each subgroup $G\supset \FSym(X)$ of $\Sym(X)$  is the coarsest  Hausdorff  group topology on $G$ (more generally, the coarsest $T_1$-topology which turns $G$ into a [semi]-topological group), and $\Tau_p$  coincides with the Zariski and Markov topologies $\Zeta$ and $\M$ on $G$. Answering another question of Dikranjan, we prove that the centralizer topology $\C$ on the symmetric group $G=\Sym(X)$ is discrete if and only if $|X|\le\mathfrak c$. On the other hand, we prove that for a subgroup $G\supset\FSym(X)$ of $\Sym(X)$ the centralizer topology $\C$ coincides with the topologies $\Tau_p=\M=\Zeta$ if and only of $G=\FSym(X)$. We also prove that the group $\FSym(X)$ is $\sigma$-discrete in each Hausdorff shift-invariant topology.
\end{abstract}
\maketitle

\section{Introduction}

In this paper we answer several problems of Dikran Dikranjan concerning algebraically determined topologies on the group $\Sym(X)$ of permutations of a set $X$ and its normal subgroup $\FSym(X)$ consisting of all permutations $f:X\to X$ with finite support $\supp(f)=\{x\in X:f(x)\ne x\}$.

The symmetric group $\Sym(X)$ carries a natural group topology, namely the topology of pointwise convergence $\Tau_p$, inherited from the Tychonoff power $X^X$ of the set $X$ endowed with the discrete topology (a topology $\Tau$ on a group $G$ is called a {\em group topology} if $(G,\Tau)$ is a topological group).
Answering a question of Ulam \cite[p.178]{Scot} (cf. \cite{Ulam}), Gaughan \cite{Gau} proved that for each set $X$ the topology $\Tau_p$ is the coarsest Hausdorff group topology on the symmetric group $G=\Sym(X)$ (cf. \cite[1.7.9]{DPS} and \cite[5.2.2]{Dik}). On the other hand, Dierolf and Schwanengel \cite{DiS} proved that for each set $X$ and each subgroup $G\supset\FSym(X)$ of $\Sym(X)$ the topology $\Tau_p$ is a minimal Hausdorff group topology on $G$. In \cite{Gabor} Dikranjan asked if the results of Gaughan and Dierolf, Schwanengel can be unified. More precisely, he made the following conjecture:

\begin{conjecture}[Dikranjan]\label{dikconj} Let $X$ be an infinite set, and $G$ a subgroup of $\Sym(X)$ such that  $\FSym(X)\subset G$. Then the topology of pointwise convergence is the coarsest Hausdorff group topology on $G$.
\end{conjecture}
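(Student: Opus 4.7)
The plan is to prove $\Tau_p=\M$ on $G$, where $\M$ denotes the Markov topology (the infimum of all Hausdorff group topologies on $G$); this equality immediately identifies $\Tau_p$ as the coarsest Hausdorff group topology on $G$. The inclusion $\M\subseteq\Tau_p$ is automatic since $\Tau_p$ is itself a Hausdorff group topology on $G$, so the real content is $\Tau_p\subseteq\M$: every Hausdorff group topology on $G$ refines $\Tau_p$.

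Because the pointwise stabilizers $\mathrm{St}(F)=\bigcap_{x\in F}\mathrm{St}(x)$ with $F\subseteq X$ finite form a neighborhood base at $\id$ for $\Tau_p$, it suffices to show that for every Hausdorff group topology $\tau$ on $G$ and every $x\in X$, the point-stabilizer $\mathrm{St}(x)=\{g\in G:g(x)=x\}$ is a $\tau$-neighborhood of $\id$. I would argue by contradiction: fix $x_0\in X$ and suppose $\mathrm{St}(x_0)$ is \emph{not} a $\tau$-neighborhood of $\id$. Then for every $\tau$-open neighborhood $V$ of $\id$ there exists a ``witness'' $g_V\in V$ with $y_V:=g_V(x_0)\ne x_0$.

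The crux of the proof is to manufacture, from this family of witnesses, a fixed non-identity element $\sigma\in G$ lying in \emph{every} $\tau$-neighborhood of $\id$, which, since $\tau$ is Hausdorff and hence $\bigcap_V V=\{\id\}$, yields the required contradiction. The construction follows the Gaughan--Dierolf--Schwanengel pattern: one combines each witness $g_V$ with suitable transpositions and conjugating permutations from $\FSym(X)\subseteq G$---all continuous group operations in $\tau$---so as to ``normalize'' the variable target $y_V$ to a single fixed target $y_\star\in X\setminus\{x_0\}$, producing the fixed transposition $t_{x_0,y_\star}$ as a $\tau$-controlled expression in $g_V$ and in fixed elements of $\FSym(X)$. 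This places $t_{x_0,y_\star}$ inside every $\tau$-neighborhood of $\id$, supplying the desired $\sigma$.

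The main obstacle is precisely this normalization step: one must design an algebraic scheme that, irrespective of the choice of witness $g_V$, produces the \emph{same} fixed non-identity element $\sigma$ while the expression remains $\tau$-close to $\id$. The naive attempt of intersecting centralizers of single transpositions $t_{x_0,y}$ only isolates $\{\id\}$ and is therefore useless; genuinely more elaborate combinations (products and conjugates of transpositions with distinct supports) are needed. The hypothesis $\FSym(X)\subseteq G$ is indispensable here, as it supplies both the transpositions $t_{x_0,y}$ and the conjugating permutations required to uniformize the variable targets $y_V$; without them there is no algebraic flexibility inside $G$ to execute the compression. Once $\sigma$ is produced, the contradiction with Hausdorffness of $\tau$ is immediate and the conjecture follows.
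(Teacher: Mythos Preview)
Your proposal is a plan whose central step is left open. You yourself identify the ``normalization step'' as the main obstacle and say that ``genuinely more elaborate combinations \dots\ are needed,'' but you do not supply them. Invoking a ``Gaughan--Dierolf--Schwanengel pattern'' does not fill the gap: Gaughan's argument for $\Sym(X)$ uses permutations with infinite support that need not lie in a given subgroup $G$, while Dierolf--Schwanengel proved only that $\Tau_p$ is \emph{minimal} on $G$ (no strictly coarser Hausdorff group topology), not that it is the coarsest. Unifying the two was exactly the content of the conjecture, so there is no off-the-shelf pattern to cite. Moreover, the specific mechanism you sketch---conjugate to move the variable target $y_V$ to a fixed $y_\star$---cannot be used directly: the conjugating permutation would depend on $y_V$, hence on $V$, so it is not a fixed element and continuity does not apply to it. What is missing is precisely a \emph{fixed} finite algebraic word in $g_V$ and fixed elements of $\FSym(X)$ whose value is a fixed $\sigma\ne\id$ whenever $g_V(x_0)\ne x_0$; you have not exhibited one.

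The paper avoids this direct normalization altogether. It introduces a restricted Zariski topology $\Zetapp$ on $G$, generated by the subbasic sets $\{x:xbx^{-1}\ne aba^{-1}\}$ together with $\{x:(xcx^{-1})b(xcx^{-1})^{-1}\ne b\}$ for involutions $b,c$, and proves two facts: (i) $\Zetapp$ is contained in every $T_1$-topology turning $G$ into a [semi]-topological group (in particular in every Hausdorff group topology), and (ii) $\Zetapp=\Tau_p$. Together these give the conjecture, and in fact more. The heart of (ii) is Lemma~\ref{l2}: for each $3$-element $A\subset X$ the pointwise stabilizer $G_A$ is $\Zetapp$-open. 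Its proof is by contradiction with a Baire-category flavor: if $G_A$ were not open it would be closed nowhere dense, and then a finite intersection of subbasic sets of the second kind---those involving the \emph{iterated} conjugation $(xcx^{-1})b(xcx^{-1})^{-1}$ with $b,c$ transpositions---must meet the complement of a nowhere dense set, yielding an element $u$ that sends the supports of certain transpositions outside a fixed finite set, which contradicts the defining inequality. This iterated-conjugation device is precisely the ``more elaborate combination'' you anticipated; the paper supplies it and verifies that it works, whereas your proposal stops at anticipating its existence.
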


In this paper we shall confirm this conjecture of Dikranjan. We shall do that by showing that the topology $\Tau_p$  coincides with certain algebraically determined topologies on $G$.

\section{Zariski topologies on permutation groups}

In this section we shall consider three algebraically determined topologies on a group $G$:
\begin{itemize}
\item the {\em Zariski topology} $\Zeta$ generated by the sub-base consisting of the sets
$\{x\in G:x^{\e_1}g_1x^{\e_2}g_2\cdots x^{\e_n}g_n\ne 1_G\}$, where $n\in\IN$, $g_1,\dots,g_n\in G$, and $\e_1,\dots,\e_n\in\{-1,1\}$;
\item a {\em restricted Zariski topology} $\Zetap$, generated by the sub-base consisting of the sets $\{x\in G:xbx^{-1}\ne aba^{-1}\}$ where $a,b\in G$ and $b^2=1_G$;
\item a {\em restricted Zariski topology} $\Zetapp$, generated by the sub-base consisting of the sets
$\{x\in G:xbx^{-1}\ne aba^{-1}\}$ and $\{x\in G:(xcx^{-1})b(xcx^{-1})^{-1}\ne b\}$ where $a,b,c\in G$ and $b^2=c^2=1_G$.
\end{itemize}
Here by $1_G$ we denote the identity element of the group $G$.

It is clear that $\Zetap\subset\Zetapp\subset\Zeta$ and these three topologies on $G$ are {\em shift-invariant} in the sense that for any points $a,b\in G$ the two-sided shift $s_{a,b}:G\to G$, $s_{a,b}:x\mapsto axb$, is continuous. It is also clear that the Zariski topology $\Zeta$ is weaker than an arbitrary Hausdorff group topology on $G$. In particular, $\Zeta\subset\Tau_p$.

The Zariski topology $\Zeta$ is well known in the theory of (topological) groups (see \cite{Br}, \cite{Dik}, \cite{DS}, \cite{DS2}, \cite{DS3}, \cite{DT}, \cite{Sipa}, \cite{BP}, \cite{BPS}) and its origin goes back to Markov \cite{Mar}. On the other hand, the restricted Zariski topologies $\Zetap$ and $\Zetapp$ are less studied and seem to be not used before. Observe that for each Abelian group $G$ the topologies $\Zetap$ and $\Zetapp$ are anti-discrete.

\begin{theorem}\label{main2} For each set $X$ of cardinality $|X|\ge 3$ and each subgroup $G\subset \Sym(X)$ with $\FSym(X)\subset G$ we get $$\Zetap\subset\Zetapp=\Zeta=\mathcal T_p.$$If the set $X$ is infinite, then $\Zetap\ne\Zetapp$.
\end{theorem}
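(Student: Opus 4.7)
The inclusions $\Zetap\subset\Zetapp\subset\Zeta\subset\Tau_p$ are already in place: the first two compare richer sub-bases (every sub-basic $\Zetapp$-open set is a $\Zeta$-sub-basic open, since it has the form $\{x:w(x)\neq 1_G\}$ for a word $w$ in $x$, $x^{-1}$ and constants from $G$), and $\Zeta\subset\Tau_p$ because $\Tau_p$ is a Hausdorff group topology and $\Zeta$ is coarser than any such topology. Hence the substance of the equality chain is the reverse inclusion $\Tau_p\subset\Zetapp$, while the strict inequality $\Zetap\neq\Zetapp$ in the infinite case requires a separate witness.

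My plan for $\Tau_p\subset\Zetapp$ exploits the shift-invariance of $\Zetapp$: each two-sided shift $s_{a,b}$ is a $\Zetapp$-homeomorphism (its inverse $s_{a^{-1},b^{-1}}$ being also a shift). A $\Tau_p$-neighborhood base at $g_0\in G$ consists of the translates $g_0\bigcap_{x\in F}G_x$ for finite $F\subset X$, where $G_x=\{g\in G:g(x)=x\}$; so it is enough to prove that each individual stabilizer $G_{x_0}$ is a $\Zetapp$-neighborhood of $1_G$. The building blocks will be the transpositions $c=(x_0,u)\in\FSym(X)\subset G$, whose conjugates $xcx^{-1}=(x(x_0),x(u))$ directly record the value $x(x_0)$, paired with involutions $b\in\FSym(X)$ chosen so that $c\notin C_G(b)$; this places $1_G$ in the second-type sub-basic $\Zetapp$-open set $\{x:(xcx^{-1})b(xcx^{-1})^{-1}\neq b\}$, which translates to the concrete requirement that $b$ does not preserve the unordered pair $\{x(x_0),x(u)\}$. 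The hypothesis $|X|\ge 3$ supplies enough auxiliary points. The main obstacle I expect is combinatorial: to choose finitely many pairs $(c_i,b_i)=((x_0,u_i),b_i)$ with pairwise distinct $u_i$ such that the intersection $\bigcap_i\{x:(xc_ix^{-1})b_i(xc_ix^{-1})^{-1}\neq b_i\}$ is actually contained in $G_{x_0}$. The key input should be the bijectivity of $x$: for $x(x_0)\neq x_0$, each second-type condition restricts $x(u_i)$ to a small set determined by $x(x_0)$ and $b_i$, and the injectivity of $x$ will make these restrictions pairwise incompatible once enough $u_i$ have been used.

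For $\Zetap\neq\Zetapp$ with $X$ infinite I will exhibit a concrete witness. Pick three distinct $p,q,r\in X$ and set $c=(p,q)$, $b=(p,r)$; both are involutions in $\FSym(X)\subset G$, and $c\notin C_G(b)$ because the transpositions share $p$ but are not equal. Then $U=\{x:(xcx^{-1})b(xcx^{-1})^{-1}\neq b\}$ is a $\Zetapp$-neighborhood of $1_G$. On the other hand, every $\Zetap$-neighborhood of $1_G$ is the complement of a finite union of left cosets $a_iC_G(\beta_i)$ of centralizers of involutions $\beta_i\in G$ with $a_i\notin C_G(\beta_i)$; each such coset constrains $x$ only through its action on the finite set $\mathrm{supt}(\beta_i)$. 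Since $X$ is infinite, in any such neighborhood one can produce an $x$ with $\{x(p),x(q)\}\cap\{p,r\}=\emptyset$; but then $(x(p),x(q))\in C_G((p,r))$, so $x\notin U$, proving that $U$ is not $\Zetap$-open.
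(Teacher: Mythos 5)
Your treatment of the inclusion $\Tau_p\subset\Zetapp$ takes a genuinely different route from the paper's and can be made to work. The paper argues by contradiction through a Baire-category device (Lemmas~\ref{l2a} and~\ref{l2}): if some $G_A$ with $|A|=3$ were not $\Zetapp$-open, then all such subgroups $G_A$ and all sets $G(A,B)$ would be closed and nowhere dense, and a permutation $u$ chosen in a suitable $\Zetapp$-neighborhood of $1_G$ but outside $G(A,A\cup B)\cup G(B,A\cup B)$ yields a contradiction. Your plan is instead a direct exhibition of a finite sub-basic intersection squeezed between $1_G$ and the stabilizer $G_{\{x_0\}}$, which is more explicit and arguably more elementary. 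Be aware, though, that your key heuristic --- ``each second-type condition restricts $x(u_i)$ to a small set determined by $x(x_0)$ and $b_i$'' --- is valid only when $b_i$ fixes the point $p=x(x_0)$; if $p\in\supp(b_i)$, the condition merely forbids $x(u_i)=b_i(p)$ and gives essentially no restriction, so no single involution $b$ can suffice. A correct realization needs at least two: fix distinct points $x_0,z,z'$, choose distinct $u_1,u_2,u_3\in X\setminus\{x_0,z\}$ and distinct $u_4,u_5,u_6\in X\setminus\{x_0,z'\}$, and put $c_i=t_{x_0,u_i}$ for all $i$, $b_i=t_{x_0,z}$ for $i\le 3$ and $b_i=t_{x_0,z'}$ for $i\ge 4$. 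Each set $U_i=\{x\in G:(xc_ix^{-1})b_i(xc_ix^{-1})^{-1}\ne b_i\}$ contains $1_G$ because $c_ib_ic_i^{-1}=t_{u_i,z}\ne t_{x_0,z}$ (resp.\ with $z'$). For $x\in\bigcap_{i=1}^6U_i$ with $p=x(x_0)\notin\{x_0,z\}$ the first three conditions become $x(u_i)\in\{x_0,z\}$ for $i=1,2,3$, forcing three distinct values into a two-point set; hence $p\in\{x_0,z\}$, symmetrically $p\in\{x_0,z'\}$, so $p=x_0$ and $\bigcap_{i=1}^6U_i\subset G_{\{x_0\}}$. From there shift-invariance finishes the argument exactly as you indicate.

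Your proof of $\Zetap\ne\Zetapp$, however, has a genuine gap. The witness $U$ is well chosen, and the description of basic $\Zetap$-neighborhoods of $1_G$ as complements of finite unions of cosets $\alpha_iC_G(\beta_i)$ with $\beta_i^2=1_G$ and $\alpha_i\notin C_G(\beta_i)$ is correct. But the claim that ``each such coset constrains $x$ only through its action on the finite set $\supp(\beta_i)$'' is false for $G=\Sym(X)$ (and for every $G\ne\FSym(X)$): an involution $\beta_i\in G$ may be a product of infinitely many disjoint transpositions, so $\supp(\beta_i)$ can be infinite and membership in $\alpha_iC_G(\beta_i)$ constrains $x$ on an infinite set. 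Producing, inside an arbitrary basic $\Zetap$-neighborhood of $1_G$, a permutation $x$ with $\{x(p),x(q)\}\cap\{p,r\}=\emptyset$ is precisely the technical content of the paper's Lemma~\ref{ZneZ}: one separates the $\beta_i$ with finite support (handled by mapping a large enough finite set $F\supset\{p,q,r\}$ injectively into $X\setminus F$, which makes the relevant supports disjoint) from those with infinite support, for which one must inductively choose points $x_i\in\supp(\beta_i)$ and values $y_i$ avoiding everything chosen before, so that $x\beta_ix^{-1}$ and $\alpha_i\beta_i\alpha_i^{-1}$ disagree at the point $x(x_i)$. Without this construction (or an equivalent) the second half of your argument does not go through; with it, your witness works, since such an $x$ satisfies $\supp(xcx^{-1})=\{x(p),x(q)\}$ disjoint from $\supp(b)=\{p,r\}$, whence $x\notin U$.
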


The rest of this section is devoted to the proof of Theorem~\ref{main2}. We assume that $X$ is a set of cardinality $|X|\ge 3$ and $G$ is a subgroup of the symmetric group $\Sym(X)$ such that $\FSym(X)\subset G$. Elements of the set $X$ will be denoted by letters $x,y,a,b,c$, while elements of the symmetric group $\Sym(X)$ by $g,f,h,t,s,u,v,w$. For two points $x,y\in X$ by $t_{x,y}\in\FSym(X)$ we shall denote the transposition which exchanges $x$ and $y$ but  does not move other points of $X$. It is clear that $t_{x,y}$ is a unique permutation $t\in \Sym(X)$ with $\supp(t)=\{x,y\}$.

\begin{lemma}\label{notcom} For any permutation $f\in\Sym(X)$ and any points $x,y\in X$ with $x\ne f(x)$ and $y\notin\{x,f(x)\}$, the transposition $t_{x,y}$ does not commute with $f$, that is $t_{x,y}\circ f\ne f\circ t_{x,y}$.
\end{lemma}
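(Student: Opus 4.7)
The plan is to test non-commutativity by evaluating both compositions at the single point $x$ and showing the outputs differ. This reduces the whole claim to a one-line calculation, so I would not introduce any heavy machinery.

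Concretely, I would compute $(t_{x,y}\circ f)(x)$ and $(f\circ t_{x,y})(x)$ separately. For the first: $(t_{x,y}\circ f)(x) = t_{x,y}(f(x))$, and the hypotheses tell us that $f(x)\ne x$ and (since $y\notin\{x,f(x)\}$) $f(x)\ne y$. Because $t_{x,y}$ moves only the two points $x$ and $y$, it fixes $f(x)$, so $(t_{x,y}\circ f)(x)=f(x)$. For the second: $(f\circ t_{x,y})(x)=f(y)$, directly from the definition of $t_{x,y}$.

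Thus the two values coincide iff $f(x)=f(y)$, which by injectivity of $f\in\Sym(X)$ means $x=y$. But $y\ne x$ (again by $y\notin\{x,f(x)\}$), so $f(x)\ne f(y)$ and hence $t_{x,y}\circ f\ne f\circ t_{x,y}$.

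There is essentially no obstacle here; the only care needed is making sure that every hypothesis on $y$ is used precisely once: $y\ne f(x)$ is what lets $t_{x,y}$ fix $f(x)$ in the first computation, while $y\ne x$ is what makes the two output values $f(x)$ and $f(y)$ distinct via injectivity of $f$.
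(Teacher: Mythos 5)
Your proof is correct and is exactly the paper's argument: the paper's one-line proof is $t_{x,y}\circ f(x)=f(x)\ne f(y)=f\circ t_{x,y}(x)$, i.e.\ evaluation of both compositions at the point $x$. You have merely spelled out the justifications (that $t_{x,y}$ fixes $f(x)$ because $f(x)\notin\{x,y\}$, and that $f(x)\ne f(y)$ by injectivity since $x\ne y$) that the paper leaves implicit.
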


\begin{proof} This follows from $t_{x,y}\circ f(x)=f(x)\ne f(y)=f\circ t_{x,y}(x)$.\end{proof}

Given a subset $A\subset X$, consider the subgroups
$$G(A)=\{g\in G:\supp(g)\subset A\}\mbox{ \ and \ }G_A=\{g\in G:\supp(g)\cap A=\emptyset\}=\{g\in G:g|_A=\id|_A\}$$of $G$. Observe that $G_A=G(X{\setminus}A)$, $G(A)\cap G_A=\{1_G\}$ and any two permutations $f\in G(A)$ and $g\in G_A$ commute because they have disjoint supports: $\supp(f)\cap\supp(g)\subset A\cap(X\setminus A)=\emptyset$.
\smallskip

The definitions of the topologies $\Zeta$ and $\mathcal T_p$ guarantee that those are $T_1$-topologies. The same is true for the topologies $\Zetap$ and $\Zetapp$.

\begin{lemma}\label{l1} The topologies $\Zetap\subset\Zetapp$ satisfy the separation axiom $T_1$.
\end{lemma}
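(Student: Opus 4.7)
The plan is to reduce to showing that $\Zetap$ alone is $T_1$, since $\Zetap \subset \Zetapp$ and a topology finer than a $T_1$ topology is automatically $T_1$. So I need, for any two distinct elements $g,h \in G$, a $\Zetap$-open neighborhood of $g$ not containing $h$ (and vice versa by symmetry).

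The natural move is to exploit the sub-basic sets $U_{a,b} = \{x \in G : xbx^{-1}\ne aba^{-1}\}$ with $b^2 = 1_G$. To separate $g$ from $h$, I would simply take $a := h$, so that $U_{h,b} = \{x : xbx^{-1} \ne hbh^{-1}\}$ automatically excludes $h$ itself. The question then reduces to: can I choose $b \in G$ with $b^2 = 1_G$ such that $gbg^{-1}\ne hbh^{-1}$, equivalently such that $b$ fails to commute with $f := h^{-1}g$? Since $g \ne h$, the permutation $f$ is nontrivial, so there exists $x \in X$ with $f(x) \ne x$. Using $|X| \ge 3$, I can pick $y \in X\setminus\{x,f(x)\}$.

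At this point Lemma \ref{notcom} supplies exactly what I need: the transposition $b := t_{x,y}$ does not commute with $f$. Since $t_{x,y} \in \FSym(X) \subset G$ and $t_{x,y}^2 = 1_G$, this $b$ is an admissible choice in the definition of $\Zetap$. Thus $g \in U_{h,b}$ while $h \notin U_{h,b}$, which is what was needed. Swapping the roles of $g$ and $h$ gives the symmetric open set, completing the $T_1$ verification.

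I expect no real obstacle here; the argument is essentially an application of the preceding lemma combined with the observation that taking $a = h$ trivializes one side of the inequality. The hypothesis $|X| \ge 3$ is used exactly once, to guarantee the existence of the auxiliary point $y$, and the inclusion $\FSym(X)\subset G$ is used to ensure that the separating involution $t_{x,y}$ actually lies in $G$.
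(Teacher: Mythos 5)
Your proof is correct and follows essentially the same route as the paper: both reduce to Lemma~\ref{notcom} applied to the ``difference'' permutation $h^{-1}g$ and the transposition $t_{x,y}$, the only cosmetic difference being that the paper first separates $h^{-1}g$ from $1_G$ via the sub-basic set with $a=1_G$ and then applies a shift, whereas you plug $a=h$ directly into the sub-basic set --- which yields the very same open set. No gaps.
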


\begin{proof} Given two distinct permutations $f,g\in G$, consider the permutation $h=f^{-1}\circ g\ne 1_G$ and find a point $x\in X$ such that $h(x)\ne x$. Since $|X|\ge 3$, we can choose a point $y\in X\setminus\{x,h(x)\}$ and consider the transposition $t=t_{x,y}$. Then $t^2=1_G$ and $t\circ h\ne h\circ t$ by Lemma~\ref{notcom}.
Now we see that $U=\{u\in G:utu^{-1}\ne t\}$ is a $\Zetap$-open set which contains $h$ but not $1_G$. Then its shift $f\circ U=\{v\in G:vtv^{-1}=ftf^{-1}\}\in\Zetap$ contains $g=f\circ h$ but not $f=f\circ 1_G$.
\end{proof}

If the set $X$ is finite, then the group $G\subset\Sym(X)$ is finite too. In this case the $T_1$-topologies $\Zetap\subset\Zetapp\subset \Zeta\subset\Tau_p$ are discrete and hence coincide.
So, we assume that the set $X$ is infinite.

\begin{lemma}\label{l2a} For each 3-element subset $A\subset X$ the subgroup $G_A$ is $\Zetap$-closed  and so also $\Zetapp$-closed in $G$. \end{lemma}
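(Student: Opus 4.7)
The plan is to show $G\setminus G_A$ is $\Zetap$-open (hence $G_A$ is $\Zetap$-closed), after which the inclusion $\Zetap\subset\Zetapp$ recorded in Section~2 gives the $\Zetapp$-part for free.  Write $A=\{a_1,a_2,a_3\}$ and, for each pair of distinct indices $i,j\in\{1,2,3\}$, take the involution $b_{ij}:=t_{a_i,a_j}\in\FSym(X)\subset G$.  Applied with $a=1_G$ and $b=b_{ij}$, the definition of the $\Zetap$-sub-base produces the open sets
\[V_{ij}:=\{u\in G:u\,b_{ij}\,u^{-1}\ne b_{ij}\}\in\Zetap.\]
I aim to prove the equality $G\setminus G_A=V_{12}\cup V_{13}\cup V_{23}$, which immediately finishes the argument.

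The inclusion $V_{ij}\subset G\setminus G_A$ is the easy direction: any $h\in G_A$ fixes both $a_i$ and $a_j$, and the general identity $h\,t_{x,y}\,h^{-1}=t_{h(x),h(y)}$ then gives $h\,b_{ij}\,h^{-1}=b_{ij}$, so $h\notin V_{ij}$.  The reverse inclusion is where I will use the hypothesis $|A|=3$.  For $g\notin G_A$ I must produce indices $i,j$ with $g\,b_{ij}\,g^{-1}\ne b_{ij}$, and by the same conjugation identity this means $\{g(a_i),g(a_j)\}\ne\{a_i,a_j\}$.  My plan is a short contradiction: if $\{g(a_i),g(a_j)\}=\{a_i,a_j\}$ for every pair, then $g(a_1)$ lies in $\{a_1,a_2\}\cap\{a_1,a_3\}=\{a_1\}$, forcing $g(a_1)=a_1$; by symmetry $g$ fixes $A$ pointwise, contradicting $g\notin G_A$.

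This last step is the only place where three points are genuinely needed—with only two points the single pair would be preserved by the swap as well as by the identity, and the argument would collapse.  There is no real obstacle beyond this small pigeonhole observation; the hypothesis $\FSym(X)\subset G$ is used only to guarantee that the transpositions $t_{a_i,a_j}$ lie in $G$ and so may serve as parameters in the defining sub-base of $\Zetap$.
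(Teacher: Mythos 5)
Your proof is correct and follows essentially the same route as the paper's: both arguments use the sub-basic $\Zetap$-open sets $\{u\in G:ut u^{-1}\ne t\}$ for transpositions $t$ supported in $A$, which are disjoint from $G_A$ because disjointly supported permutations commute, and which cover $G\setminus G_A$ because $|A|=3$. The only cosmetic difference is that you exhibit $G\setminus G_A$ as the explicit union $V_{12}\cup V_{13}\cup V_{23}$ and verify the covering by a pigeonhole on preserved pairs, whereas the paper argues pointwise, choosing for each $f\notin G_A$ a transposition $t_{a,b}$ with $a\in\supp(f)\cap A$ and $b\in A\setminus\{a,f(a)\}$ and invoking Lemma~\ref{notcom}.
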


\begin{proof} Take any permutation $f\in G\setminus G_A$ and find a point $a\in A$ with $f(a)\ne a$.  Since $|A|=3$, we can choose a point $b\in A\setminus\{a,f(a)\}$ and consider the transposition $t_{a,b}$. By Lemma~\ref{notcom}, $t_{a,b}\circ f\ne f\circ t_{a,b}$. Since $\supp(t_{a,b})=\{a,b\}\subset A$, the transposition $t_{a,b}$ commutes with all permutations $g\in G_A$, which implies that $$O_f=\{g\in G:g\circ t_{a,b}\ne t_{a,b}\circ g\}=\{g\in G:gt_{a,b}g^{-1}\ne t_{a,b}\}$$ is a $\Zetap$-open neighborhood of $f$ that does not intersect the subgroup $G_A$ and witnesses that this subgroup is $\Zetap$-closed.
\end{proof}

\begin{lemma}\label{l2} For each 3-element subset $A\subset X$ the subgroup $G_A$ is $\Zetapp$-open.
\end{lemma}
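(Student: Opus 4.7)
The plan is to use shift-invariance of $\Zetapp$ to reduce the claim to exhibiting a single basic $\Zetapp$-open neighborhood $U$ of $1_G$ with $U\subset G_A$: then $G_A=\bigcup_{f\in G_A}fU$ is $\Zetapp$-open. Recall we are already in the case when $X$ is infinite, so $X\setminus A$ is infinite.

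Writing $A=\{a_1,a_2,a_3\}$, I would build $U$ pair by pair, using the elementary observation that
\[
G_A=\bigcap_{1\le i<j\le 3}\{x\in G:x(\{a_i,a_j\})=\{a_i,a_j\}\}
\]
(setwise stability of all three $2$-element subsets of $A$ forces pointwise fixing, since intersecting any two of the stabilized pairs gives a singleton $\{a_k\}$). Thus it is enough, for each pair $\{a,b\}\subset A$, to produce a basic $\Zetapp$-open neighborhood of $1_G$ inside the setwise stabilizer $\{x\in G:x(\{a,b\})=\{a,b\}\}$, and then intersect the three such neighborhoods.

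Fix a pair $\{a,b\}\subset A$. Using that $X\setminus A$ is infinite, pick four pairwise distinct points $y,y',z,z'\in X\setminus A$ and consider the four type-2 sub-basic sets
\[
V_\tau:=\bigl\{x\in G:(xt_{a,b}x^{-1})\,\tau\,(xt_{a,b}x^{-1})^{-1}\ne\tau\bigr\},\qquad \tau\in\{t_{a,y},\,t_{a,y'},\,t_{b,z},\,t_{b,z'}\}.
\]
For each $\tau$, the involutions $t_{a,b}$ and $\tau$ have supports meeting in exactly one point and so do not commute; this simultaneously makes $V_\tau$ a legitimate $\Zetapp$-sub-basic set and puts $1_G\in V_\tau$. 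Since $xt_{a,b}x^{-1}=t_{x(a),x(b)}$ is a transposition, the membership $x\in V_\tau$ says precisely that the set $S:=\{x(a),x(b)\}$ meets the support of $\tau$ in exactly one point.

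For $x\in\bigcap_\tau V_\tau$, the four constraints split into a pair involving $a$ (from $\tau=t_{a,y},\,t_{a,y'}$) and one involving $b$. A short case analysis on each pair shows the former forces either $a\in S$ with $y,y'\notin S$ or $S=\{y,y'\}$, and analogously for $b$ with $z,z'$. Since $a,b\in A$ while $y,y',z,z'\notin A$, the two ``mixed'' combinations are inconsistent, and the remaining branch $S=\{y,y'\}=\{z,z'\}$ is ruled out by the distinctness of the four chosen points. Hence $S=\{a,b\}$, so $x$ setwise stabilizes $\{a,b\}$, completing the construction for the pair. The main obstacle will be keeping this case analysis crisp; crucially, the doubling to two $y$'s and two $z$'s is exactly what excludes the spurious branch $S\subset X\setminus A$, and it is here that the type-2 sub-basic sets of $\Zetapp$ (unavailable in $\Zetap$) do the real work.
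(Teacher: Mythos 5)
Your argument is correct, and it takes a genuinely different --- and more direct --- route than the paper's. The paper proceeds by contradiction with a category-style argument: assuming some $G_{A'}$ is not $\Zetapp$-open, it combines shift-invariance with the $\Zetap$-closedness of $G_A$ (Lemma~\ref{l2a}) to conclude that every $G_A$, and hence every set $G(A,B)=\{f\in G:f(A)\subset B\}$, is nowhere dense; it then builds a $\Zetapp$-neighborhood of $1_G$ from type-2 sub-basic sets attached to the transpositions $t_{a,b}$ (for disjoint triples $A,B$ and $a\in A$, $b\in B$) and derives a contradiction from a member $u$ of that neighborhood with $u(a),u(b)\notin A\cup B$, since then $\supp(usu^{-1})$ is disjoint from $\supp(t)$ while $usu^{-1}$ is simultaneously forced not to commute with $t$. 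You instead exhibit an explicit basic $\Zetapp$-neighborhood of $1_G$ contained in $G_A$: the decomposition of $G_A$ into the three setwise pair-stabilizers is valid, each $V_\tau$ is a legitimate type-2 sub-basic set containing $1_G$ (the supports $\{a,b\}$ and $\supp(\tau)$ meet in exactly one point, so the transpositions fail to commute), and the case analysis is airtight: the conditions $|S\cap\{a,y\}|=|S\cap\{a,y'\}|=1$ force either $a\in S$ with $y,y'\notin S$ or $S=\{y,y'\}$, and since $a,b\in A$ while $y,y',z,z'$ are distinct points outside $A$, only the branch $S=\{a,b\}$ survives. The doubling to two witnesses per point of the pair is essential (with a single $y$ and $z$ the spurious solution $S=\{y,b\}$ would survive), and you correctly flag that this is where the type-2 generators of $\Zetapp$, unavailable in $\Zetap$, do the work. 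What your approach buys is a constructive proof that bypasses Lemma~\ref{l2a} and the nowhere-density claims entirely; what the paper's approach buys is that it needs only the single family $\{t_{a,b}:a\in A,\;b\in B\}$ of witnesses and no case analysis, at the price of an indirect argument. Either proof is complete.
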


\begin{proof} Assume that for some 3-element subset $A'\subset X$ the subgroup $G_{A'}$ is not $\Zetapp$-open. Since the topology $\Zetapp$ is shift-invariant, the subgroup $G_{A'}$ has empty interior, and being closed by Lemma~\ref{l2a}, is nowhere dense in $(G,\Zetapp)$.

\begin{claim} For each 3-element subset $A\subset X$ the subgroup $G_A$ is closed and nowhere dense in $(G,\Zetapp)$.
\end{claim}

\begin{proof} Choose any permutation $f\in \FSym(X)\subset G$ with $f(A)=A'$ and observe that $G_A=f^{-1}\circ G_{A'}\circ f$ is closed and nowhere dense in $(G,\Zetapp)$ being a (two-sided) shift of the closed nowhere dense subgroup $G_{A'}$.
\end{proof}

\begin{claim}\label{cl2} For each 3-element subset $A\subset X$ and each finite subset $B\subset X$ the subset $G(A,B)=\{f\in G:f(A)\subset B\}$ is nowhere dense in $(G,\Zetapp)$.
\end{claim}

\begin{proof} Since the set of functions from $A$ to $B$ is finite, there is a finite subset $F\subset G(A,B)$ such that for each permutation $g\in G(A,B)$ there is a permutation $f\in F$ with $g|A=f|A$.
Then $f^{-1}\circ g|_A\in G_A$, which implies that
the set
$$G(A,B)=\bigcup_{f\in F}f\circ G_A$$is closed and nowhere dense in $(G,\Zetapp)$, being a finite union of shifts of the closed nowhere dense subgroup $G_A$.
\end{proof}

Now we can finish the proof of Lemma~\ref{l2}. Since $X$ is infinite, we can find two disjoint 3-element subsets $A,B\subset X$.
Claim~\ref{cl2} guarantees that the set
$G(A,A\cup B)\cup G(B,A\cup B)$ is nowhere dense in $(G,\Zetapp)$.
For any points $a\in A$, $b\in B$ consider the transposition $t_{a,b}\in\FSym(X)\subset G$ and put  $T=\{t_{a,b}:a\in A,\;b\in B\}$. For any transpositions $t,s\in T$ with $t\circ s\ne s\circ t$, the set $$U_{t,s}=\{u\in G: (usu^{-1})t(usu^{-1})^{-1}\ne t\}$$is a $\Zetapp$-open neighborhood of $1_G$ by the definition of the topology $\Zetapp$. Since $T$ is finite, the intersection
$$U=\bigcap\{U_{t,s}:t,s\in T,\;ts\ne st\}$$ is a  $\Zetapp$-open neighborhood of $1_G$.
Choose a permutation $u\in U$, which does not belong to the nowhere dense subset $G(A,A\cup B)\cup G(B,A\cup B)$. Then $u(a),u(b)\notin A\cup B$ for some points $a\in A$ and $b\in B$. Choose any point $c\in B\setminus\{b\}$ and consider two non-commuting transpositions $t=t_{a,c}$ and $s=t_{a,b}$.

It follows from $u\in U\subset U_{t,s}$ that the transposition $v=usu^{-1}$ does not commute with the transposition $t$. On the other hand, the support $\supp(v)=\supp(usu^{-1})=u(\supp(s))=u(\{a,b\})=\{u(a),u(b)\}$ does not intersect the set $A\cup B\supset\{a,c\}=\supp(t)$, which implies that $t v= v t$.
This contradiction completes the proof of Lemma~\ref{l2}.
\end{proof}

Now we can prove the first part of Theorem~\ref{main2}.

\begin{lemma}\label{l:eq} $\Zetap\subset \Zetapp=\Zeta=\Tau_p$.
\end{lemma}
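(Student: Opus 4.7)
The chain $\Zetap\subset\Zetapp\subset\Zeta\subset\Tau_p$ is already established in the excerpt (the first two inclusions by definition, and $\Zeta\subset\Tau_p$ because $\Tau_p$ is a Hausdorff group topology while $\Zeta$ is coarser than any such topology). So the only thing left is to collapse the chain, which reduces to proving the single inclusion $\Tau_p\subset\Zetapp$.

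The plan is to show that the canonical base of $\Tau_p$-neighborhoods of $1_G$ consists of $\Zetapp$-open sets; shift-invariance of $\Zetapp$ then transports the conclusion to every other point. Recall that a base at $1_G$ for $\Tau_p$ is given by the pointwise stabilizers $G_F=\{g\in G:g|_F=\id|_F\}$ as $F$ ranges over finite subsets of $X$. The crucial fact already in hand is Lemma~\ref{l2}: for every 3-element $A\subset X$, the stabilizer $G_A$ is $\Zetapp$-open.

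The reduction proceeds in two steps. First, for a single point $x\in X$, pick any 3-element set $A\subset X$ containing $x$; then $G_A\subset G_{\{x\}}$, so $G_{\{x\}}$ contains a $\Zetapp$-open neighborhood of $1_G$, and since $\Zetapp$ is shift-invariant, $G_{\{x\}}$ is $\Zetapp$-open. Second, for an arbitrary finite $F\subset X$, write
$$G_F=\bigcap_{x\in F}G_{\{x\}},$$
a finite intersection of $\Zetapp$-open subgroups, hence $\Zetapp$-open. (Alternatively, cover $F$ by finitely many 3-element sets $A_1,\dots,A_n\subset X$ and use $G_F\supset G_{A_1}\cap\cdots\cap G_{A_n}$.) This shows that every $\Tau_p$-neighborhood of $1_G$ is also a $\Zetapp$-neighborhood of $1_G$.

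Finally, given any $f\in G$ and any $\Tau_p$-open $U\ni f$, the set $f^{-1}\circ U$ is a $\Tau_p$-neighborhood of $1_G$, so it contains some $\Zetapp$-open $V\ni 1_G$; the two-sided shift $s_{f,1_G}$ is $\Zetapp$-continuous, so $f\circ V\subset U$ is a $\Zetapp$-neighborhood of $f$. Thus $\Tau_p\subset\Zetapp$, and combining with $\Zetapp\subset\Zeta\subset\Tau_p$ yields the desired equality $\Zetapp=\Zeta=\Tau_p$. There is no real obstacle; the work has all been done in Lemma~\ref{l2}, and this step is merely the observation that the open sets $G_A$ for 3-element $A$ already suffice to generate the topology $\Tau_p$ at the identity.
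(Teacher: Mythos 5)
Your proof is correct and follows essentially the same route as the paper: both reduce the statement to $\Tau_p\subset\Zetapp$, invoke Lemma~\ref{l2} to get $\Zetapp$-openness of the stabilizers $G_A$ for $3$-element sets $A$, and then use shift-invariance to propagate openness to the (sub-)basic $\Tau_p$-open sets. The paper works directly with the sub-basic sets $G(x,y)=\{g\in G:g(x)=y\}$ and a single shift by some $f\in\FSym(X)$ with $f(y)=x$, whereas you work with the neighborhood base of stabilizers at $1_G$ and then shift to arbitrary points; this is only a cosmetic difference.
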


\begin{proof} Since $\Zetap\subset\Zetapp\subset \Zeta\subset\Tau_p$, the equality $\Zetapp=\Zeta=\Tau_p$ will follow as soon as we check that $\Tau_p\subset\Zetapp$. Since the topology $\mathcal T_p$ of pointwise convergence is generated by the sub-base consisting of the sets $G(x,y)=\{g\in G:g(x)=y\}$, $x,y\in X$, it suffices to show that any such set $G(x,y)$ is $\Zetapp$-open. Choose any permutation $f\in \FSym(X)\subset G$ with $f(y)=x$ and observe that the shift $f\circ G(x,y)=G(x,x)=G_{\{x\}}$ is a subgroup of $G$, which contains the subgroup $G_A$ for any 3-element subset $A\subset X$ with $x\in A$. By Lemma~\ref{l2}, the subgroup $G_A$ is $\Zetapp$-open, and so are the subgroup $G(x,x)\supset G_A$ and its shift $G(x,y)=f^{-1}\circ G(x,x)$.
\end{proof}

Finally, we prove the second part of Theorem~\ref{main2}.

\begin{lemma}\label{ZneZ} If the set $X$ is infinite, then $\Zetap\ne\Tau_p.$
\end{lemma}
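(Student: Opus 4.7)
The plan is to exhibit a sequence in $G$ that converges to $1_G$ in $\Zetap$ but not in $\Tau_p$; since $\Zetap\subset\Tau_p$, this will force strict containment. Using that $X$ is infinite, I would fix pairwise distinct points $x_0,x_1,x_2,\ldots$ in $X$ and set $g_n:=t_{x_0,x_n}\in\FSym(X)\subset G$ for $n\ge 1$. Since $g_n(x_0)=x_n\ne x_0$, the sequence $(g_n)$ plainly does not converge to $1_G$ in the topology of pointwise convergence, so the content of the lemma reduces to proving $\Zetap$-convergence.

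By the definition of the sub-base of $\Zetap$, it suffices to show that for each involution $b\in G$ and each $a\in G$ with $c:=aba^{-1}\ne b$, only finitely many indices $n$ satisfy $g_n b g_n^{-1}=c$. I would prove the stronger assertion that there is at most one such $n$, by dividing into cases according to the position of $\{x_0,x_n\}$ relative to the orbit partition of $b$. The transposition $g_n$ commutes with $b$ exactly when $\{x_0,x_n\}$ is either disjoint from $\supp(b)$ or is a $2$-orbit of $b$; in either situation $g_nbg_n^{-1}=b\ne c$, so these $n$ are excluded at once.

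In the remaining cases — precisely one of $x_0,x_n$ lies in $\supp(b)$, or both lie in distinct $2$-orbits of $b$ — a direct computation shows that $g_nbg_n^{-1}$ agrees with $b$ off the two pairs of $b$ meeting $\{x_0,x_n\}$ and modifies those pairs in a way determined by $x_n$ alone. The equation $g_n b g_n^{-1}=c$ then forces $x_n$ to be given by an explicit formula in $b$, $c$, and $x_0$ (of the shape $x_n=b(c(x_0))$ or $x_n=c(b(x_0))$, depending on the case), so $n$ is determined uniquely. The main potential obstacle is simply keeping these few subcases organized, but each reduces to a one-line transposition calculation because $g_n$ has support of size $2$ and therefore can perturb the orbit structure of $b$ only in a very constrained way. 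Combining the two halves, $g_n\to 1_G$ in $\Zetap$ while $g_n\not\to 1_G$ in $\Tau_p$, hence $\Zetap\ne\Tau_p$.
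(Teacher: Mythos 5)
Your proposal is correct, and it takes a genuinely different route from the paper. The paper argues by contradiction: it assumes the $\Tau_p$-neighborhood $G(a,a)=\{g\in G:g(a)=a\}$ of $1_G$ is $\Zetap$-open, extracts a finite family of sub-basic sets $\{v:vf_iv^{-1}\ne g_if_ig_i^{-1}\}$ covering $1_G$, and then, by a rather delicate inductive choice of points (treating the involutions $f_i$ with infinite support and those with finite support separately), builds a single finitely supported permutation $u$ lying in all of these sets yet moving $a$. You instead fix one sequence $g_n=t_{x_0,x_n}$ once and for all and show it $\Zetap$-converges to $1_G$ while failing to $\Tau_p$-converge; the whole burden is carried by the observation that for a fixed involution $b$ and fixed $c=aba^{-1}\ne b$, the equation $t_{x_0,x_n}\,b\,t_{x_0,x_n}=c$ pins down $x_n$ to one of at most two explicit values ($c(b(x_0))$ or $b(c(x_0))$, depending on how $\{x_0,x_n\}$ meets the orbit structure of $b$), so each sub-basic neighborhood of $1_G$ omits at most two terms of the sequence. (Your claim of ``at most one such $n$'' should read ``at most two,'' since when $x_0\in\supp(b)$ both formulas can in principle produce a candidate; this does not affect finiteness.) Your argument is shorter and more elementary --- it replaces the paper's ad hoc witness construction for each basic neighborhood by a uniform witness sequence, and it yields the additional information that $(G,\Zetap)$ has nontrivial sequences converging to $1_G$; the paper's construction, on the other hand, exhibits for each prescribed finite family of sub-basic constraints a concrete permutation violating $G(a,a)$, which is the form in which the idea is reused elsewhere in that style of argument. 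Both proofs ultimately establish the same fact, namely that $G(a,a)\in\Tau_p\setminus\Zetap$.
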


\begin{proof} Assume for a contradiction, that $\Zetap=\Tau_p$. Then for any point $a\in X$,   the $\Tau_p$-open neighborhood $G(a,a)=\{g\in G:g(a)=a\}$ of $1_G$ is $\Zetap$-open. Consequently, $$1_G\in \bigcap_{i=1}^n\{v\in G:vf_iv^{-1}\ne g_if_ig_i^{-1}\}\subset G(a,a)$$for some permutations $f_1,g_1,\dots,f_n,g_n\in G$ such that $f_i^2=1_G\ne f_i$ for all $i\le n$. We can assume that the permutations $f_1,\dots,f_n$ are ordered so that there is a number $k\in\w$ such that  a permutation $f_i$, $1\le i\le n$, has finite support $\supp(f_i)$ if and only if $i>k$. Consider the finite set $$F=\{a\}\cup\bigcup_{i=k+1}^{n}
\big(\supp(f_i)\cup g_i(\supp(f_i))\big)$$and choose any injective function $u_0:F\to X\setminus F$.  For every $i\in\{1,\dots,k\}$ by induction choose two points $x_i\in\supp(f_i)$ and $y_i\in X$ such that for the finite sets
$$X_{<i}=\{x_j:1\le j<i\} \cup\{f_j(x_j):1\le j<i\}\mbox{ \ and \ }Y_{<i}=\{g_j(x_j):1\le j<i\}\cup\{y_j:1\le j<i\}$$ the following conditions hold
\begin{enumerate}
\item $x_i\notin F\cup X_{<i}\cup f_i^{-1}(F\cup X_{<i})\cup g_i^{-1}(u_0(F)\cup Y_{<i})$;
\item $y_i\notin \{g_i(x_i),g_i(f_i(x_i))\}\cup u_0(F)\cup Y_{<i}$.
\end{enumerate}
The choice of the points $x_i,y_i$, $1\le i\le k$, allows us to find a permutation $u\in\FSym(X)$ such that $$u|F=u_0,\;\;u(x_i)=g_i(x_i)\mbox{ \ and \ }u(f_i(x_i))=y_i$$ for every $i\in\{1,\dots,k\}$.

We claim that $u\circ f_i\circ u^{-1}\ne g_i\circ f_i\circ g_i$ for every $i\in\{1,\dots,n\}$. If $i>k$, then the permutation $uf_iu^{-1}\ne 1_G$ has non-empty support $\supp(u\circ f_i\circ u^{-1})=u(\supp(f_i))\subset u_0(F)$  disjoint with $F\supset g_i(\supp(f_i))=\supp(g_if_ig_i^{-1})$, which implies that $u f_i u^{-1}\ne g_i f_i g_i^{-1}$. If $i\le k$, then
for the point $g_i(x_i)=u(x_i)$ we get
$$u\circ f_i\circ u^{-1}(g_i(x_i))=u\circ f_i(x_i)=y_i\ne g_i\circ f_i(x_i)=g_i\circ f_i\circ g_i^{-1}(g_i(x_i)).$$
Now we see that
$$u\in\bigcap_{i=1}^n\{v\in G:vf_iv^{-1}\ne g_if_ig_i^{-1}\}\subset G(a,a)$$which is not possible as $u(a)=u_0(a)\subset u_0(F)\subset X\setminus F\subset X\setminus\{a\}$.
\end{proof}

\section{The coincidence of Markov and Zariski topologies on permutation groups}

In this section we study the interplay between Zariski and Markov topologies on permutation groups and their subgroups.
By definition, the {\em Markov topology} $\M$ on a group $G$ is the intersection of all Hausdorff group topologies on $G$. The Markov topology $\M$ was explicitly introduced in \cite{DS2}, \cite{DS3} and studied in \cite{DS}, \cite{DT}.

It is clear that $\Zeta\subset \M$ for any group $G$.
By a classical result of Markov \cite{Mar}, for each countable group $G$, the topologies $\Zeta$ and $\M$ coincide. The equality $\Zeta=\M$ also holds for each Abelian group $G$; see  \cite{DS}, \cite{DS3}. Dikranjan and Shakhmatov in Question 38(933) of \cite{DS2} asked if the topologies $\Zeta$ and $\M$ coincide on each symmetric group $G=\Sym(X)$. The following corollary of Theorem~\ref{main2} answers this problem affirmatively.

\begin{corollary}\label{ZarMar} For any set $X$ and any subgroup $G\supset \FSym(X)$ of the symmetric group $\Sym(X)$ we get $\Zeta=\M$.
\end{corollary}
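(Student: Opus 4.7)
The plan is to sandwich $\M$ between $\Zeta$ and $\Tau_p$ and then invoke Theorem~\ref{main2} to collapse the sandwich. The general inclusion $\Zeta \subset \M$ follows immediately from the definitions: every sub-basic $\Zeta$-open set is the complement of a zero-set of a word map, hence closed in any Hausdorff group topology, hence closed in the intersection $\M$ of all such topologies. For the other direction, I would observe that the topology of pointwise convergence $\Tau_p$ inherited from $X^X$ restricts to a Hausdorff group topology on every subgroup $G \subset \Sym(X)$, so $\M \subset \Tau_p$ by definition of $\M$ as the infimum of all Hausdorff group topologies on $G$.

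Putting these together with Theorem~\ref{main2}, I get, in the case $|X|\ge 3$,
\[
\Zeta \subset \M \subset \Tau_p = \Zeta,
\]
which forces equality throughout and in particular $\Zeta = \M$. This handles the main case with essentially no work beyond citing the previously established coincidence $\Zeta = \Tau_p$.

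For the edge case $|X| \le 2$, the group $\Sym(X)$ is finite (trivial or of order $2$), hence any $T_1$-topology on $G$ is discrete; both $\Zeta$ and $\M$ are $T_1$ (the latter because $\Tau_p$ is Hausdorff witnesses $T_1$-ness of $\M$), so they agree trivially.

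There is no serious obstacle here: the corollary is essentially an immediate consequence of the equality $\Zeta = \Tau_p$ from Theorem~\ref{main2} together with the universal squeeze $\Zeta \subset \M \subset \tau$ valid for any Hausdorff group topology $\tau$. The only mild subtlety is making sure $\Tau_p$ is genuinely a Hausdorff group topology on the subgroup $G$, which is automatic since $\Tau_p$ is such on the ambient $\Sym(X)$ and these properties are inherited by subgroups.
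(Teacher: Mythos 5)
Your proof is correct and follows essentially the same route as the paper: the squeeze $\Zeta\subset\M\subset\Tau_p$ combined with the equality $\Zeta=\Tau_p$ from Theorem~\ref{main2}, with the small-cardinality case disposed of by finiteness. The only cosmetic difference is that you split at $|X|\le 2$ versus $|X|\ge 3$ while the paper splits at finite versus infinite; both case divisions work.
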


\begin{proof} If $X$ is finite, then the topologies $\Zeta$ and $\M$ coincide, being $T_1$-topologies on the finite group $G$. If $X$ is infinite, then the trivial inclusion $\Zeta\subset\M\subset\Tau_p$ and the non-trivial equality $\Zeta=\Tau_p$ established in Theorem~\ref{main2} imply that $\Zeta=\M$.
\end{proof}

Corollary~\ref{ZarMar} implies the following corollary that solves Question 40(i) of \cite{DS2} and Question 8.4(i) of \cite{DT}.

\begin{corollary} For each uncountable set $X$ the symmetric group $G=\Sym(X)$ has $\Zeta=\M$ but contains a subgroup $H\subset G$ with $\mathfrak Z_H\ne\mathfrak M_H$.
\end{corollary}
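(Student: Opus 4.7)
The first assertion is immediate from Corollary~\ref{ZarMar}, which gives $\mathfrak Z=\mathfrak M$ on $\Sym(X)$ for any $X$. So the substance of the plan concerns producing a subgroup $H\subset\Sym(X)$ whose Zariski and Markov topologies disagree. My strategy is to reduce to an abstract existence question about groups and then invoke Cayley's theorem.

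The reduction: both $\mathfrak Z_H$ and $\mathfrak M_H$ are invariants of the abstract group structure of $H$ (the first by its sub-basis definition, the second as the intersection of all Hausdorff group topologies, which makes no reference to an ambient group). So it suffices to exhibit any abstract group $H$ of cardinality $|H|\le|X|$ with $\mathfrak Z_H\ne\mathfrak M_H$, and then to embed $H$ into $\Sym(X)$. The embedding step is soft: by Cayley's theorem $H$ embeds into $\Sym(H)$, and since $|H|\le|X|$ we may fix an injection $H\hookrightarrow X$ and let each permutation of $H$ act as the identity on $X\setminus H$, giving an embedding $\Sym(H)\hookrightarrow\Sym(X)$.

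The nontrivial ingredient, and the main obstacle, is the existence of an uncountable group with $\mathfrak Z\ne\mathfrak M$. By a classical theorem of Markov \cite{Mar}, every countable group has $\mathfrak Z=\mathfrak M$, so such an $H$ \emph{must} be uncountable; this explains the hypothesis that $X$ be uncountable (so that $|X|\ge\aleph_1$ and some uncountable counterexample $H$ fits cardinality-wise). Such counterexamples are known in the literature surveyed in \cite{DS}, \cite{DS2}, \cite{DS3}, \cite{DT}: one can use a group in which every Hausdorff group topology is discrete (so that $\mathfrak M$ is discrete) but in which the Zariski topology is not discrete, and of cardinality at most $|X|$. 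Invoking any such $H$ and combining with the embedding above produces the required subgroup of $\Sym(X)$.

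In summary, the plan has three steps — invoke Corollary~\ref{ZarMar}; cite/construct an uncountable group $H$ with $\mathfrak Z_H\subsetneq\mathfrak M_H$; embed $H$ into $\Sym(X)$ via Cayley — and the whole content of the argument sits in the middle step, where one must appeal to a known pathological group rather than construct it from scratch inside $\Sym(X)$.
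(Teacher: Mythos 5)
Your proposal is correct and takes essentially the same route as the paper: the authors likewise deduce the first assertion from Corollary~\ref{ZarMar}, embed an abstract counterexample into $\Sym(X)$ via Cayley's theorem (any group of cardinality $\le\w_1$ embeds into $\Sym(X)$ when $X$ is uncountable), and for the middle step cite Hesse's group of cardinality $\w_1$ with $\mathfrak Z_H\ne\mathfrak M_H$ --- a non-topologizable group with non-discrete Zariski topology, i.e.\ exactly the kind of example you describe.
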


\begin{proof} By Corollary~\ref{ZarMar}, the  group $G=\Sym(X)$ has $\Zeta=\M$. Since $X$ is uncountable, the symmetric group $G=\Sym(X)$ contains an isomorphic copy of each group of cardinality $\le\w_1$, according to the classical Cayley's Theorem \cite[1.6.8]{Rob}. In particular, $G$ contains an isomorphic copy of the group $H$ of cardinality $|H|=\w_1$ with $\mathfrak Z_H\ne\mathfrak M_H$, constructed by Hesse \cite{Hesse} (see also \cite[Theorem 3.1]{DT}).
\end{proof}

\section{The minimality of the topology of pointwise convergence on permutation groups}

In this section we shall confirm Dikranjan's Conjecture~\ref{dikconj} and shall prove that for each subgroup $G\supset\FSym(X)$ of the symmetric group $\Sym(X)$ the topology of pointwise convergence is the coarsest Hausdorff group topology on $G$. In fact, we shall do that in a more general context of [semi]-topological groups.

We shall say that a group $G$ endowed with a topology is
\begin{itemize}
\item a {\em topological group} if the function $q:G\times G\to G$, $q:(x,y)\mapsto xy^{-1}$, is continuous;
\item a {\em paratopological group} if the function $s:G\times G\to G$, $s:(x,y)\mapsto xy$, is continuous;
\item a {\em quasi-topological group} if the function $q:G\times G\to G$, $q:(x,y)\mapsto xy^{-1}$, is separately continuous;
\item a {\em semi-topological group} if the function $s:G\times G\to G$, $s:(x,y)\mapsto xy$, is separately continuous;
\item a {\em \textup{[}quasi\textup{]}-topological group} if the functions $q:G\times G\to G$, $q:(x,y)\mapsto xy^{-1}$, and $[\cdot\cdot]:G\times G\to G$, $[\cdot\cdot]:(x,y)\mapsto xyx^{-1}y^{-1}$, are separately continuous;
\item a {\em \textup{[}semi\textup{]}-topological group} if the functions $s:G\times G\to G$, $s:(x,y)\mapsto xy$, and $[\cdot\cdot]:G\times G\to G$, $[\cdot\cdot]:(x,y)\mapsto xyx^{-1}y^{-1}$, are separately continuous.
\end{itemize}
These notions relate as follows:
$$
\xymatrix{
\mbox{topological group}\ar@{=>}[r]\ar@{=>}[d]&\mbox{[quasi]-topological group}\ar@{=>}[r]\ar@{=>}[d]&\mbox{quasi-topological group}\ar@{=>}[d]\\
\mbox{paratopological group}\ar@{=>}[r]&\mbox{[semi]-topological group}\ar@{=>}[r]&\mbox{semi-topological group}}
$$
Observe that a semi-topological (resp. quasi-topological) group $G$ is [semi]-topological (resp. [quasi]-topological) if and only if for every $a\in G$ the function $$\gamma_a:G\to G,\;\;\gamma_a:x\mapsto xax^{-1}$$ is continuous. In the sequel such a function $\gamma_a$ will be called a {\em conjugator}. In \cite{Kap}  a quasi-topological (resp. a [quasi]-topological) group $(G,\tau)$ whose topology $\tau$ satisfies the separation axiom $T_1$ is called a {\em $T_1$-group} (resp. a {\em $C$-group}).

It is easy to see that $(G,\Zetap)$ and  $(G,\Zetapp)$ are quasi-topological groups, while $(G,\Zeta)$ and $(G,\M)$ are [quasi]-topological groups.

\begin{proposition}\label{p1} Let $\Tau$ be a topology on a group $G$.
\begin{enumerate}
\item If $\Tau$ is a $T_1$-topology and $(G,\Tau)$ is a \textup{[}semi\textup{]}-topological group, then $\Zetapp\subset\Tau$.
\item If $\Tau$ is a $T_2$-topology and $(G,\Tau)$ is a semi-topological group, then $\Zetap\subset\Tau$.
\end{enumerate}
\end{proposition}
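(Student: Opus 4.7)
The plan is to verify, for each type of sub-basic set defining $\Zetap$ or $\Zetapp$, that it already lies in $\Tau$ under the stated hypotheses, using only continuity of translations and (for part (1)) of conjugators.

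For (1), the key point is the observation made earlier in the excerpt that in a \textup{[}semi\textup{]}-topological group every conjugator $\gamma_b\colon x\mapsto xbx^{-1}$ is continuous: writing $\gamma_b(x)=[x,b]\cdot b$, we see that $\gamma_b$ is the composition of the commutator (separately continuous by definition, hence continuous in $x$ for fixed $b$) with right translation by $b$. Granted this, the first type of sub-basic set of $\Zetapp$ is $\{x\in G:xbx^{-1}\ne aba^{-1}\}=\gamma_b^{-1}(G\setminus\{aba^{-1}\})$, which is $\Tau$-open because the $T_1$ axiom makes the singleton $\{aba^{-1}\}$ closed. For the second type, $\{x\in G:(xcx^{-1})b(xcx^{-1})^{-1}\ne b\}=(\gamma_b\circ\gamma_c)^{-1}(G\setminus\{b\})$; since $\gamma_b$ and $\gamma_c$ are continuous and $\{b\}$ is closed, this set is $\Tau$-open as well.

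For (2), conjugators need not be continuous (a semi-topological group need not even have continuous inversion), so a different argument is needed and the $T_2$ hypothesis must carry the weight. The trick is to rewrite the defining equation $xbx^{-1}=aba^{-1}$ as $xb=aba^{-1}x$. Then the sub-basic set $\{x\in G:xbx^{-1}\ne aba^{-1}\}$ is exactly the complement of the equalizer
$$E=\{x\in G:R_b(x)=L_{aba^{-1}}(x)\}$$
of the right translation $R_b\colon x\mapsto xb$ and the left translation $L_{aba^{-1}}\colon x\mapsto aba^{-1}x$, both of which are continuous in any semi-topological group. Since $\Tau$ is Hausdorff, the diagonal of $G\times G$ is closed, so $E$ is closed and its complement is $\Tau$-open.

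The substantive step in each part is spotting the right reformulation: in (1) one reduces the two sub-basic families to preimages under (continuous) conjugators of $T_1$-open complements of singletons, while in (2) one converts the condition $xbx^{-1}=aba^{-1}$ (which involves inversion) into an equation between a left translation and a right translation, and then invokes the standard fact that equalizers of continuous maps into a Hausdorff space are closed. The hypotheses $b^2=c^2=1_G$ built into the definitions of $\Zetap$ and $\Zetapp$ are not actually needed for these inclusions; they are features of the sub-bases but play no role here.
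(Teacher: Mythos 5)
Your proof is correct and follows essentially the same route as the paper: part (1) uses the identical factorization $\gamma_b=s_b\circ[\cdot,b]$ and pulls back complements of singletons, and your equalizer formulation in part (2) is just a packaged version of the paper's hands-on argument (rewrite $xbx^{-1}=aba^{-1}$ as $xb=cx$ with $c=aba^{-1}$, then separate $xb$ from $cx$ by disjoint neighborhoods and pull back via the separately continuous translations). Your closing remark that the constraints $b^2=c^2=1_G$ play no role in these inclusions is also accurate.
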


\begin{proof} 1. Assume that $\Tau$ is a $T_1$-topology and $(G,\Tau)$ is a [semi]-topological group. Fix any elements $a,b,c\in G$ with $b^2=c^2=1_G$ and observe that the conjugator $\gamma_b:G\to G$, $\gamma_b:x\mapsto xbx^{-1}$, is $\Tau$-continuous, being the composition $\gamma_b=s_b\circ [\cdot,b]$ of two $\Tau$-continuous functions $[\cdot,b]:x\mapsto xbx^{-1}b^{-1}$ and $s_b:y\mapsto yb$. Consequently, the set
$$\gamma_b^{-1}(G\setminus\{aba^{-1}\})=\{x\in G:xbx^{-1}\ne aba^{-1}\}\in\Zetap\subset\Zetapp$$is $\Tau$-open.
In particular, the set $U=\{x\in G:xbx^{-1}\ne b\}$ is $\Tau$-open.

Next, consider the $\Tau$-continuous function $\gamma_c:G\to G$, $\gamma_c:x\mapsto xcx^{-1}$, and observe that the set $$\gamma_c^{-1}(U)=\gamma_c^{-1}\big(\{y\in G:yby^{-1}\ne b\}\big)= \{x\in G:(xcx^{-1})b(xcx^{-1})^{-1}\ne b\}\in\Zetapp$$is $\Tau$-open too. Now we see that $\Zetapp\subset\Tau$ because all sub-basic sets of the topology $\Zetapp$ belong to $\Tau$.
\smallskip

2. Assume that $\Tau$ is a $T_2$-topology and $(G,\Tau)$ is a semi-topological group.
 We should prove that for any elements $a,b\in G$ with $b^2=1_G$ the sub-basic set $U=\{x\in G:xbx^{-1}\ne aba^{-1}\}\in\Zetap$ is $\Tau$-open. Fix any point $x\in U$ and observe that $xb\ne cx$ where $c=aba^{-1}$. Since
the topology $\Tau$ is Hausdorff, the distinct points $xb$ and $cx$ of the group $G$ have disjoint $\Tau$-open neighborhoods $O_{xb}$ and $O_{cx}$. The separate continuity of the group operation yields a neighborhood $O_x\in\Tau$ of the point $x$ such that $O_x\cdot b\subset O_{xb}$ and $c\cdot O_x\subset O_{cx}$.
Then $O_x\subset U$, witnessing that the set $U$ is $\Tau$-open.
\end{proof}

The following theorem confirms Dikranjan's Conjecture~\ref{dikconj} and generalizes results of Gaughan \cite{Gau} and Dierolf, Schwanengel \cite{DiS}.

\begin{theorem}\label{main} For each set $X$ and each subgroup $G\supset\FSym(X)$ of the symmetric group $\Sym(X)$,  the topology $\Tau_p$ of pointwise convergence on $G$ is the coarsest $T_1$-topology turning $G$ into a \textup{[}semi\textup{]}-topological group.
\end{theorem}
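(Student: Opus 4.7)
The plan is to combine the two ingredients already assembled in the paper. Theorem~\ref{main2} identifies $\Tau_p$ with the restricted Zariski topology $\Zetapp$ (when $|X| \ge 3$), while Proposition~\ref{p1}(1) asserts that $\Zetapp$ is contained in every $T_1$-topology making $G$ into a [semi]-topological group. Chaining these two facts gives $\Tau_p \subset \Tau$ for any such $\Tau$, which is exactly the minimality statement.

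First I would verify that $\Tau_p$ itself qualifies as one of the topologies under consideration: it is Hausdorff (being inherited from the Tychonoff power $X^X$ of a discrete space), and the standard verification shows that $(G,\Tau_p)$ is a topological group, hence in particular a $T_1$ [semi]-topological group. So $\Tau_p$ belongs to the family over which we are minimizing. For any competitor $\Tau$, Proposition~\ref{p1}(1) supplies $\Zetapp \subset \Tau$, and Theorem~\ref{main2} rewrites the left-hand side as $\Tau_p$, giving $\Tau_p \subset \Tau$. The small-cardinality cases $|X| \le 2$ are handled trivially: then $G$ is finite, every $T_1$-topology on a finite set is discrete, and $\Tau_p$ is discrete as well, so the inclusion is automatic.

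I do not expect a genuine obstacle, since the substantive work lies upstream. It is worth noting, however, that the choice to work with $\Zetapp$ (rather than $\Zetap$ or $\Zeta$) is precisely what makes the argument go through at the $T_1$/[semi]-topological level: the sub-basic sets $\{x : xbx^{-1} \ne aba^{-1}\}$ and $\{x : (xcx^{-1})b(xcx^{-1})^{-1} \ne b\}$ defining $\Zetapp$ are each a preimage of a $T_1$-open set under a conjugator $\gamma_b$ or $\gamma_c$, and in every $T_1$ [semi]-topological group such a conjugator is continuous via the factorization $\gamma_b = s_b \circ [\,\cdot\,, b]$ when $b^2 = 1_G$. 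Dropping to $\Zetap$ would only yield the coarser Hausdorff semi-topological version recorded in Proposition~\ref{p1}(2), so the whole construction of $\Zetapp$ is tailored to produce exactly the strength needed for Theorem~\ref{main}.
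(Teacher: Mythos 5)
Your proposal is correct and follows exactly the paper's own argument: cite Proposition~\ref{p1}(1) to get $\Zetapp\subset\Tau$ for any competitor $\Tau$, then use Theorem~\ref{main2} to rewrite $\Zetapp$ as $\Tau_p$. The extra remarks (checking that $(G,\Tau_p)$ itself lies in the family, and disposing of $|X|\le 2$, which Theorem~\ref{main2} formally excludes) are sensible bits of bookkeeping the paper leaves implicit, but the route is the same.
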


\begin{proof} Let $\Tau$ be a $T_1$-topology on $G$ turning $G$ into a [semi]-topological group. By Proposition~\ref{p1}(1), $\Zetapp\subset\Tau$ and by Theorem~\ref{main2}, $\Tau_p=\Zetapp\subset\Tau$.
\end{proof}

\begin{remark} Observe that for an infinite set $X$ and any subgroup $G\supset \FSym(X)$ of $\Sym(X)$ the restricted Zariski topology $\Zetap$ is a $T_1$-topology turning $G$ into a quasi-topological group, nonetheless, $\Tau_p\not\subset\Zetap$ by Theorem~\ref{main2}. This example shows that Theorem~\ref{main} cannot be generalized to the class of semi-topological or  quasi-topological groups.
\end{remark}

\section{Centralizer topology on permutation groups}

In this section we study the properties of the centralizer topology $\C$ on permutation groups $G$.
This topology was introduced by Ta\u\i manov in \cite{Tai} (cf. \cite{BD}, \cite{DT}) with the aim of topologizing non-commutative groups.

For a group $G$ its {\em centralizer topology} $\C$ is generated by the sub-base consisting of the sets
$$\{x\in G:xbx^{-1}=aba^{-1}\},$$ where $a,b\in G$. The centralizer topology $\C$ has a neighborhood base at $1_G$ consisting of the centralizers $$c_G(A)=\bigcap_{a\in A}\{x\in G:xa=ax\}$$ of finite subsets $A\subset G$. These centralizers $c_G(A)$ are open-and-closed subgroups of $G$, which implies that $\Zetap\subset\C$ for each group $G$.
By \cite[\S4]{BD}, a group $G$ endowed with its centralizer topology is a topological group. This topological group is Hausdorff if and only if the group $G$ has trivial center $c_G(G)=\{1_G\}$; see \cite[4.1]{BD}. In this case, $\Zetap\subset\Zetapp\subset\Zeta\subset\M\subset \C$. Theorem~\ref{main2} and Lemma~\ref{l1} imply that
$\Zetap\subset\Zetapp=\Zeta=\M=\Tau_p\subset \C$ for any permutation group $G\subset \Sym(X)$ with $\FSym(X)\subset G$ on a set $X$ of cardinality $|X|\ge 3$. Unlike the algebraically determined topologies $\Zetapp=\Zeta=\M$, the centralizer topology $\C$ depends essentially on the position of the group $G$ in the interval between the groups $\FSym(X)$ and $\Sym(X)$. In the extremal case $G=\Sym(X)$ the centralizer topology $\C$ is close to being discrete, as shown by the following theorem, which generalizes Theorem 4.18 of \cite{BD} and answers affirmatively Question 4.19 of \cite{BD} and Question 8.17 of \cite{DT}. In this theorem by $\mathfrak c$ we denote the cardinality of the continuum.

\begin{theorem}\label{tai} For a set $X$ of cardinality $|X|\ge 3$, the centralizer topology $\C$ on the symmetric group $G=\Sym(X)$ is discrete if and only if $|X|\le\mathfrak c$.
\end{theorem}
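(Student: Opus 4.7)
The overall framework is that the finite-set centralizers $c_G(A)$ form a neighborhood base at $1_G$ for $\C$, so $\C$ is discrete if and only if some $c_G(A)$ reduces to $\{1_G\}$ for a finite set $A\subset G$. Both directions of the equivalence are analyzed through the orbit structure of the countable subgroup $H=\langle A\rangle$ acting on $X$.

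For the necessity direction, I would assume $|X|>\mathfrak c$ and fix any finite $A\subset G$, setting $H=\langle A\rangle$. Since $H$ is countable it has at most $2^{\aleph_0}=\mathfrak c$ subgroups, and transitive $H$-sets are classified up to isomorphism by conjugacy classes of point stabilizers, so the orbits of $H$ on $X$ realize at most $\mathfrak c$ isomorphism types. Each $H$-orbit is countable, so $X$ splits into at least $|X|>\mathfrak c$ orbits, and by pigeonhole two of them, $O_1$ and $O_2$, are $H$-isomorphic via some bijection $\phi\colon O_1\to O_2$. Extending $\phi\cup\phi^{-1}$ by the identity on $X\setminus(O_1\cup O_2)$ produces a non-identity element of $c_G(A)$, so $c_G(A)\ne\{1_G\}$ for every finite $A$ and $\C$ is not discrete.

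For the sufficiency direction, if $X$ is finite then $G$ is finite and any $T_1$-topology is discrete; if $|X|=\aleph_0$, I would identify $X$ with $\IZ$ and take $a(n)=n+1$, $b(n)=-n$, so that commutation with $a$ forces a permutation to be a translation $n\mapsto n+k$ and commutation with $b$ forces $k=0$, giving $c_G(\{a,b\})=\{1_G\}$. In the main case $\aleph_0<|X|\le\mathfrak c$, the plan is to choose $a,b\in G$ so that the action of $H=\langle a,b\rangle$ on $X$ has two properties: (i) any two distinct $H$-orbits are non-isomorphic as $H$-sets, and (ii) every $H$-orbit is \emph{rigid} in the sense that $N_H(\mathrm{Stab}_H(x))=\mathrm{Stab}_H(x)$ for each $x\in X$. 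Any $h\in c_G(\{a,b\})$ then preserves each orbit setwise by (i) and restricts to an $H$-equivariant bijection on each, which by (ii) must be the identity, yielding $c_G(\{a,b\})=\{1_G\}$.

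To realize such an action I would take $H$ isomorphic to the free group $F_2$ (faithful via two sufficiently generic permutations) and exploit the abundance of pairwise non-conjugate self-normalizing subgroups of $F_2$, of both finite and countably infinite index, assembling their coset spaces so as to partition $X$ of any cardinality up to $\mathfrak c$. The chief obstacle is precisely this last step: producing, for each cardinal $\kappa$ with $\aleph_0<\kappa\le\mathfrak c$, a family of pairwise non-conjugate self-normalizing subgroups of $F_2$ whose coset spaces have combined cardinality exactly $\kappa$. Once such a partition is arranged, properties (i) and (ii) are built into the definition and the triviality of $c_G(\{a,b\})$ is immediate.
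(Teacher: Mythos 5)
Your overall architecture is sound and in fact mirrors the paper's: reduce discreteness to finding a finite $A\subset G$ with $c_G(A)=\{1_G\}$, handle $|X|>\mathfrak c$ by a counting/pigeonhole argument on orbit types, handle $|X|=\w$ by an explicit two-element centralizer computation, and handle $\w<|X|\le\mathfrak c$ by building a $2$-generated group $H$ acting on $X$ so that the orbits are pairwise non-isomorphic $H$-sets with self-normalizing point stabilizers. Your necessity argument and your $|X|=\w$ computation are correct and are even more self-contained than the paper, which simply cites Theorem 4.18 of \cite{BD} for both of these cases. Your reduction of the case $\w<|X|\le\mathfrak c$ to the existence, in a finitely generated group, of $\mathfrak c$ many pairwise non-conjugate self-normalizing subgroups of infinite index is also the same reduction the paper uses (via Proposition 4.17(c) of \cite{BD}); note that pairwise non-conjugacy comes for free from having $\mathfrak c$ many such subgroups, since each conjugacy class is countable, and the ``combined cardinality exactly $\kappa$'' issue you worry about is harmless, as $\kappa$ countably infinite coset spaces have union of size $\kappa$.

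The genuine gap is exactly the step you flag as ``the chief obstacle'': you never produce the continuum-sized family of self-normalizing subgroups, and this is the entire technical content of the theorem in the range $\w<|X|\le\mathfrak c$. Appealing to ``the abundance of pairwise non-conjugate self-normalizing subgroups of $F_2$'' is not a proof; the existence of $\mathfrak c$ many self-normalizing subgroups of a $2$-generated group is precisely what must be established, and it is not an off-the-shelf fact one may simply invoke. The paper does this by a concrete construction: it takes $H=F_\IZ\rtimes_\Phi\IZ$ (the free group on generators indexed by $\IZ$, extended by the shift automorphism $\Phi$), observes that $H$ is generated by two elements, and shows that for every infinite \emph{thin} set $A\subset\IZ$ (meaning $A\cap(A+n)$ is finite for all $n\ne 0$) the subgroup $F_A$ generated by $A$ is self-normalizing in $H$; since there are $\mathfrak c$ infinite thin subsets of $\IZ$, this yields the required family. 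The thinness hypothesis is what makes the self-normalization argument work (it forces the $\IZ$-coordinate of a normalizing element to vanish), and some such nontrivial input is unavoidable. Until you supply an analogous construction inside $F_2$ (or any finitely generated group), your proof of the ``if'' direction for uncountable $|X|\le\mathfrak c$ is incomplete.
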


\begin{proof} Since $|X|\ge 3$, the group $G=\Sym(X)$ has trivial center, which implies that $(G,\C)$ is a Hausdorff topological group; see \cite[\S4]{BD}. If $X$ is finite, then the Hausdorff topological group $(G,\C)$ is finite and hence the centralizer topology $\C$ is discrete.

So, we assume that the set $X$ is infinite. If $|X|>\mathfrak c$, then the centralizer topology $\C$ on the group $G=\Sym(X)$ is not discrete by Theorem 4.18(2) of \cite{BD}. If $|X|=\w$, then the centralizer topology $\C$ is discrete by Theorem 4.18(1) of \cite{BD}.
So, it remains to show that $\C$ is discrete if $\w<|X|\le\mathfrak c$.
By Proposition 4.17(c) of \cite{BD} the discreteness of $\C$ will follows as soon as we construct a finitely generated group $H$ containing continuum many subgroups $H_\alpha\subset H$, $\alpha\in\mathfrak c$, which are {\em self-normalizing} in the sense that for an element $h\in H$, the equality $hH_\alpha h^{-1}=H_\alpha$ holds if and only if $h\in H_\alpha$. Such a group $H$ will be constructed in the following lemma.
\end{proof}

\begin{lemma} There is a finitely generated group $H$ containing continuum many self-normalizing subgroups.
\end{lemma}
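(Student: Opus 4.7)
The plan is to take $H=\langle a,b\mid b^2=1_H\rangle\cong(\IZ/2\IZ)\ast\IZ$, which is manifestly $2$-generated. For each subset $T\subseteq\IZ_{\ge 0}$ with $0\in T$, set $t_n:=a^nba^{-n}$ and
$$H_T:=\langle t_n:n\in T\rangle\le H.$$
There are $\mathfrak c$ such parameter sets $T$, so it suffices to prove that every $H_T$ is self-normalizing in $H$ and that distinct $T$ yield distinct $H_T$.

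The first step I would carry out is an application of the normal form theorem for the free product $(\IZ/2\IZ)\ast\IZ$: a formal reduced word $t_{n_1}t_{n_2}\cdots t_{n_r}$ with $n_i\ne n_{i+1}$ unfolds to the $H$-reduced expression $a^{n_1}ba^{n_2-n_1}b\cdots ba^{-n_r}$, which is non-trivial. Consequently $H_T$ is isomorphic to $\ast_{n\in T}(\IZ/2\IZ)$ on the generators $t_n$; the involutions $t_n, t_m$ are pairwise non-conjugate in $H_T$ for $n\ne m$; and the centralizer of $t_n$ in $H$ is $\langle t_n\rangle$. A by-product is that $n\in T$ iff $t_n\in H_T$, which immediately distinguishes the subgroups $H_T$ from each other.

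Next I would establish the self-normalizing property. Suppose $gH_Tg^{-1}=H_T$. For each $n\in T$ the element $gt_ng^{-1}\in H_T$ is an involution, hence $H_T$-conjugate to a \emph{unique} generator $t_{\phi(n)}$; write $gt_ng^{-1}=k_nt_{\phi(n)}k_n^{-1}$ with $k_n\in H_T$. Using the conjugation identity $a^{\phi(n)-n}t_na^{n-\phi(n)}=t_{\phi(n)}$ together with $C_H(t_{\phi(n)})=\langle t_{\phi(n)}\rangle$, one locates
$$g\in H_T\cdot\bigl\{a^{\phi(n)-n},\,a^{\phi(n)-n}t_n\bigr\}.$$
Applying the retraction $\sigma:H\to\IZ$ that kills $b$ (and vanishes on $H_T$, since each $t_n$ is a conjugate of $b$) gives $\sigma(g)=\phi(n)-n$ for every $n\in T$. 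Hence $\phi$ is translation by the fixed integer $\sigma(g)$ and $T+\sigma(g)=T$. Because $0\in T\subseteq\IZ_{\ge 0}$, any nonzero translation of $T$ either forces $0$ to disappear from $T$ or pushes an element into $\IZ_{<0}$; so $\sigma(g)=0$, $\phi=\mathrm{id}$, and $g\in H_T$.

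The main obstacle is the third step, in which one must carefully parse the free-product normal form in $H$ to pin down $g$ inside a prescribed coset of $H_T$, and then exploit the arithmetic rigidity of a subset of $\IZ_{\ge 0}$ containing $0$ to kill every nonzero translation. The preceding steps are essentially formal consequences of the structure theory of free products, and the counting of parameter sets $T$ is immediate.
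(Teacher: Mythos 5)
Your proof is correct, and while it shares the overall architecture of the paper's construction --- a two-generator group of the form (infinitely generated free product)$\,\rtimes\,\IZ$ with the $\IZ$-factor shifting the free factors by conjugation, and continuum many subgroups indexed by translation-rigid subsets of $\IZ$ --- the route through the details is genuinely different. The paper takes $H=F_\IZ\rtimes_\Phi\IZ$ (in fact a free group on two generators, with the conjugates $t_n=a^nba^{-n}$ freely generating the kernel of the retraction onto $\IZ$) and indexes the subgroups by infinite \emph{thin} sets $A\subset\IZ$: thinness makes $A\cap(A+n)$ finite for $n\ne0$, which traps $hF_Ah^{-1}$ in a proper free factor and kills the $\IZ$-component of a normalizer, after which the argument appeals to the self-normalizing property of free factors of free groups. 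You instead make $b$ an involution, so that the relevant free product is $*_{n\in\IZ}\IZ/2\IZ$, and you index by sets $0\in T\subseteq\IZ_{\ge0}$. The torsion buys you a more hands-on way to read off the $\IZ$-component: conjugacy classes of involutions in $H_T$ biject with $T$, the centralizer computation $C_H(t_n)=\langle t_n\rangle$ locates $g$ in $H_T\cdot\{a^{\phi(n)-n},a^{\phi(n)-n}t_n\}$, and the retraction $\sigma$ exhibits the induced map $\phi$ on $T$ as a translation; the elementary rigidity fact that $T+c=T$ with $0\in T\subseteq\IZ_{\ge0}$ forces $c=0$ then replaces thinness, and the same computation delivers $g\in H_T$ directly rather than by a separate appeal to properties of free factors. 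Two points worth spelling out in a final write-up: the equality $T+\sigma(g)=T$ (as opposed to the insufficient inclusion $T+\sigma(g)\subseteq T$, which $T=\IZ_{\ge0}$ satisfies with $c=1$) uses that conjugation by $g$ is an automorphism of $H_T$, so $\phi$ is a bijection of $T$; and the existence and uniqueness of the generator $t_{\phi(n)}$ conjugate to a given involution is the torsion/conjugacy theorem for free products.
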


\begin{proof}
Consider the free group $F_\IZ$ with countably many generators, identified with integers. Then the shift $\varphi:\IZ\to \IZ$, $\varphi:n\mapsto n+1$, extends to an automorphism $\Phi:F_\IZ\to F_\IZ$ of the free group $F_\IZ$. Let $H=F_\IZ\rtimes_{\Phi}\IZ$ be the semi-direct product of the free group $F_\IZ$
and the additive group $\IZ$ of integers. Elements of the group $H$ are pairs $(v,n)\in F_\IZ\times \IZ$ and the group operation is given by the formula
$$(v,n)\cdot (u,m)=(v\cdot\Phi^n(u),n+m).$$
We shall identify $F_\IZ$ and $\IZ$ with the subgroups $F_\IZ\times\{0\}$ and $\{\mathbf 1\}\times\IZ$, where $\mathbf 1$ stands for the identity element of the free group $F_\IZ$.
Observe that the group $H$ is finitely-generated: it is generated by two elements, $(\mathbf 1,1)$ and $(z,0)$, where $z\in\IZ\subset F_\IZ$ is one of the generators of the free group $F_\IZ$.

Following \cite{LP}, we call subset $A\subset\IZ$ {\em thin} if for each $n\in\IZ\setminus\{0\}$ the intersection $A\cap (A+n)$ is finite. It is easy to check that the family $\A$ of all infinite thin subsets of $\IZ$ has cardinality $|\A|=\mathfrak c$. For each infinite thin set $A\in\A$ denote by $F_A$ the (free) subgroup generated by the set $A\subset \IZ$ in the free group $F_\IZ=F_\IZ\times\{0\}\subset  H$. It remains to prove that the subgroup $F_A$ is self-normalizing in $H$.

Given any element $h=(u,n)\in F_\IZ\rtimes_\Phi\IZ=H$ with $hF_Ah^{-1}=F_A$, we need to prove that $h\in F_A$. First we show that $n=0$. Assume for a contradiction that $n\ne 0$. Find a finite subset $B\subset\IZ$ such that $u\in F_B$ and consider the set $C=B\cup (A+n)$. It follows from our assumption that the intersection $A\cap C=(A\cap B)\cup (A\cap (A+n))$ is finite and hence $A\cap C\ne A$ and $F_{C\cap A}\ne F_A$.

Taking into account that $h^{-1}=(\Phi^{-n}(u^{-1}),-n)$, we see that for each word $w\in F_A\subset F_\IZ\subset H$ $$hwh^{-1}=(u,n)\cdot (w,0)\cdot (\Phi^{-n}(u^{-1}),-n)=(u\Phi^{n}(w),n)\cdot (\Phi^{-n}(u^{-1}),-n)=(u\Phi^n(w)u^{-1},0)\in F_C\cap F_A=F_{A\cap C},$$which implies that
$F_A=hF_Ah^{-1}\subset F_{A\cap C}\subset F_A$ and $F_{A\cap C}=F_A$. This contradiction proves that $n=0$ and hence the element $h=(u,0)$ can be identified with the element $u\in F_\IZ$. Now it is easy to see that the equality $hF_Ah^{-1}=F_A$ implies that $uF_Au^{-1}=F_A$ and hence $h=u\in F_A$.
\end{proof}

By Theorem~\ref{tai}, for the symmetric group $G=\Sym(X)$ of an infinite set $X$ of cardinality $|X|\le\mathfrak c$, the centralizer topology $\C$ is discrete and hence does not coincide with the topology $\Tau_p=\M=\Zeta=\Zetapp$. For the group $G=\FSym(X)$ the situation is totally different.

\begin{theorem}\label{cent} For a set $X$ of cardinality $|X|\ge 3$ and a subgroup $G\supset \FSym(X)$ of the symmetric group $\Sym(X)$, the equality $\Tau_p=\C$ holds if and only if $G=\FSym(X)$.
\end{theorem}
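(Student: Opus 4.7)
My plan is to split the equivalence into two directions, leveraging that the inclusion $\Tau_p\subset\C$ was already observed in the discussion preceding Theorem~\ref{tai}. So the nontrivial content reduces to determining when the reverse inclusion $\C\subset\Tau_p$ holds.

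\textbf{Sufficiency ($G=\FSym(X)\Rightarrow\Tau_p=\C$).} A neighborhood base at $1_G$ in $\C$ is given by the centralizers $c_G(A)=\bigcap_{a\in A}\{x\in G:xa=ax\}$ of finite sets $A=\{a_1,\dots,a_n\}\subset G$. When $G=\FSym(X)$, each $a_i$ has finite support, so $S:=\bigcup_{i=1}^n\supp(a_i)$ is a finite subset of $X$, and hence $G_S$ is a $\Tau_p$-open neighborhood of $1_G$. I would verify the inclusion $G_S\subset c_G(A)$: any $g\in G_S$ fixes $S$ pointwise and therefore, being a bijection, preserves $X\setminus S$ setwise; combined with $\supp(a_i)\subset S$, a short case check on $x\in S$ versus $x\in X\setminus S$ yields $g\circ a_i=a_i\circ g$ for every $i$. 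Since $\Tau_p$ and $\C$ are both group topologies, this local containment suffices.

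\textbf{Necessity ($G\supsetneq\FSym(X)\Rightarrow\Tau_p\ne\C$).} Because $\Sym(X)=\FSym(X)$ when $X$ is finite, $X$ must be infinite, and one may fix $\phi\in G\setminus\FSym(X)$, so $\supp(\phi)$ is infinite. I would exhibit the sub-basic $\C$-open neighborhood $c_G(\phi)=\{g\in G:g\phi g^{-1}=\phi\}$ of $1_G$ and show it cannot be $\Tau_p$-open. Suppose toward contradiction that some finite $S\subset X$ satisfied $G_S\subset c_G(\phi)$. Since $\supp(\phi)$ is infinite while $S$ is finite, I would pick $x\in\supp(\phi)\setminus S$, so that $x\notin S$ and $\phi(x)\ne x$; since $X$ is infinite, I could then pick $y\in X\setminus(S\cup\{x,\phi(x)\})$. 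The transposition $t_{x,y}\in\FSym(X)\subset G$ has support $\{x,y\}$ disjoint from $S$, so $t_{x,y}\in G_S$; yet Lemma~\ref{notcom} applied with $f=\phi$ gives $t_{x,y}\circ\phi\ne\phi\circ t_{x,y}$, contradicting $t_{x,y}\in c_G(\phi)$.

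The argument is short and no real obstacle is expected. The only mildly subtle point is the setwise preservation of $X\setminus S$ by elements of $G_S$ exploited in the sufficiency direction, which is what forces $a_i(g(x))=g(x)$ for $x\notin S$ and makes the commutations work; the necessity direction then simply transplants Lemma~\ref{notcom} into a carefully chosen $\Tau_p$-neighborhood of $1_G$.
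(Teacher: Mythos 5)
Your proof is correct and follows essentially the same route as the paper: the necessity direction is the paper's argument verbatim (a transposition $t_{x,y}$ with support outside $S$ but meeting the infinite set $\supp(\phi)$, contradicting $G_S\subset c_G(\phi)$ via Lemma~\ref{notcom}). The only difference is in the sufficiency direction, where the paper proves the exact identity $G(X{\setminus}A)=c_G\big(G(A)\big)$ (Lemma~\ref{l3.3}) to match the two neighborhood bases, whereas you establish only the inclusion $G_S\subset c_G(A)$ from disjointness of supports and obtain the reverse inclusion $\Tau_p\subset\C$ by citing the chain $\Zeta=\Tau_p\subset\M\subset\C$ already recorded before Theorem~\ref{tai}; both are valid.
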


\begin{proof} To prove the ``only if'' part, assume that $G\ne \FSym(X)$ and find a permutation $g\in G$ with infinite support. Assuming that $\Tau_p=\C$, we conclude that the $\C$-open set $c_G(g)=\{f\in G:f\circ g=g\circ f\}$ is $\Tau_p$-open. So, we can find a finite subset $A\subset X$ such that $G_A\subset c_G(g)$. Since the set $\supp(g)\subset X$ is infinite, there are points $x\in\supp(g)\setminus A$ and $y\in X\setminus (A\cup\{x,g(x)\})$. Consider the transposition $t=t_{x,y}$ with $\supp(t)=\{x,y\}\subset X\setminus A$ and observe that $t\circ g\ne g\circ t$ by Lemma~\ref{notcom}. But this contradicts the inclusion $t\in G_A\subset c_G(g)$. So, $\Tau_p\ne\C$.
\smallskip

To prove the ``if'' part, assume that $G=\FSym(X)$. In this case we shall show that $\C=\Tau_p$. Denote by $[X]^{\ge 3}$ the family of all finite subsets $A\subset X$ with $|A|\ge 3$ and observe that the subgroups
$G(X{\setminus}A)$, $A\in[X]^{\ge 3}$, form a neighborhood basis of the topology $\mathcal T_p$ at $1_G$, while the centralizers $c_G\big(G(A)\big)$ of the finite subgroups
$G(A)=\{g\in G:\supp(g)\subset A\}$,  $A\in[X]^{\ge 3}$, form a neighborhood basis of the centralizer topology $\C$ at $1_G$. The following lemma implies that these neighborhoods bases coincide, so $\mathcal T_p=\C$.
\end{proof}

\begin{lemma}\label{l3.3} Let $G=\FSym(X)$ be the group of finitely supported permutations of a set $X$. Then $G(X{\setminus}A)=c_G\big(G(A)\big)$ for each subset $A\subset X$ of cardinality $|A|\ge 3$.
\end{lemma}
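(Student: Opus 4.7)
The plan is to prove the two inclusions separately, using Lemma~\ref{notcom} as the main tool for the nontrivial direction.

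For the inclusion $G(X{\setminus}A) \subset c_G(G(A))$, I would simply observe that this is a special case of the general fact mentioned earlier in the text: any $g \in G_A = G(X{\setminus}A)$ has $\supp(g) \cap A = \emptyset$, so $\supp(g)$ is disjoint from $\supp(f)$ for every $f \in G(A)$, and permutations with disjoint supports commute. Hence $g$ commutes with every element of $G(A)$.

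For the reverse inclusion $c_G(G(A)) \subset G(X{\setminus}A)$, I would argue by contrapositive. Suppose $g \in G$ satisfies $g \notin G(X{\setminus}A)$; then there exists $x \in A$ with $g(x) \ne x$. Since $|A| \ge 3$, the set $A \setminus \{x, g(x)\}$ is nonempty, so I can pick $y \in A \setminus \{x, g(x)\}$. The transposition $t_{x,y}$ lies in $G(A)$ because $\supp(t_{x,y}) = \{x,y\} \subset A$. By Lemma~\ref{notcom} (applied with $f = g$ and the given points $x,y$, which satisfy $g(x) \ne x$ and $y \notin \{x, g(x)\}$), the transposition $t_{x,y}$ does not commute with $g$. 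Therefore $g \notin c_G(G(A))$, as required.

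The main (and only) obstacle is ensuring that the auxiliary point $y \in A$ can be chosen distinct from both $x$ and $g(x)$; this is precisely where the hypothesis $|A| \ge 3$ is used, and it is what allows Lemma~\ref{notcom} to apply with the witnessing transposition lying inside the subgroup $G(A)$. Both inclusions together yield $G(X{\setminus}A) = c_G(G(A))$.
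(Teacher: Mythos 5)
Your proof is correct and follows essentially the same route as the paper: the easy inclusion via disjoint supports, and the reverse inclusion by producing a transposition $t_{x,y}\in G(A)$ that fails to commute with $g$ via Lemma~\ref{notcom}, with $|A|\ge 3$ used exactly where the paper uses it. The only cosmetic difference is that you phrase the argument as a contrapositive rather than a contradiction.
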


\begin{proof} The inclusion $G(X{\setminus}A)\subset c_G\big(G(A)\big)$ trivially follows from the fact that any two permutations with disjoint supports commute.
To prove the reverse inclusion, fix any permutation $f\in c_G\big(G(A)\big)$. Assuming that $f\notin G(X{\setminus}A)$, we can find a point $a\in\supp(f)\cap A$. Since $|A|\ge 3$, there is a point $b\in A\setminus\{a,f(a)\}$. Now consider the transposition $t=t_{a,b}\in\FSym(X)=G$ with $\supp(t)=\{a,b\}$ and observe that $f$ and $t$ do not commute according to Lemma~\ref{notcom}. On the other hand, $f$ should commute with $t$ as $\supp(t)\subset A$ and $f\in c_G\big(G(A)\big)\subset c_G(t)$.
\end{proof}

Since the topologies $\Zeta$ and $\C$ are determined by the algebraic structure of a group $G$, Theorem~\ref{cent} implies the following algebraic fact (for which it would be interesting to find an algebraic proof).

\begin{corollary} For each set $X$, a subgroup $G\supset\FSym(X)$ of the group $\Sym(X)$ is isomorphic to $\FSym(X)$ if and only if $G=\FSym(X)$.
\end{corollary}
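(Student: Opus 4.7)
The ``if'' direction is immediate, so the plan is to establish the nontrivial ``only if'' direction: given a subgroup $G\supset\FSym(X)$ of $\Sym(X)$ such that $G$ is abstractly isomorphic to $\FSym(X)$, I want to deduce the set-theoretic equality $G=\FSym(X)$. The cases $|X|\le 2$ are trivial since then $\FSym(X)=\Sym(X)$, so I may assume $|X|\ge 3$, and when $X$ is finite there is again nothing to prove because $\FSym(X)=\Sym(X)$. Hence the real case is $X$ infinite with $|X|\ge 3$.

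The key point is that both the Zariski topology $\Zeta$ and the centralizer topology $\C$ are defined purely in terms of the group operation (through subbases built from words in elements of the group or from centralizers of finite subsets), and hence are preserved by arbitrary abstract group isomorphisms. In particular, if $\varphi:G\to\FSym(X)$ is a group isomorphism, then $\varphi$ is a homeomorphism from $(G,\Zeta)$ onto $(\FSym(X),\Zeta)$ and from $(G,\C)$ onto $(\FSym(X),\C)$. Consequently any equality or inequality between $\Zeta$ and $\C$ transfers from $\FSym(X)$ to $G$.

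Now I apply the results already established. By Theorem~\ref{main2} we have $\Zeta=\Tau_p$ on every subgroup of $\Sym(X)$ containing $\FSym(X)$, in particular on $\FSym(X)$ itself. By Theorem~\ref{cent} we have $\Tau_p=\C$ on $\FSym(X)$. Combining the two gives $\Zeta=\C$ on $\FSym(X)$, and by the algebraic invariance noted above this transfers to $\Zeta=\C$ on $G$. Using $\Zeta=\Tau_p$ on $G$ (Theorem~\ref{main2}) once more, we get $\Tau_p=\C$ on $G$, whereupon Theorem~\ref{cent} applied to $G$ forces $G=\FSym(X)$.

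There is no substantive obstacle here; the only thing to be careful about is making explicit that $\Zeta$ and $\C$ are genuinely algebraic invariants (so that the isomorphism $\varphi$ carries one to the other on the nose), and that Theorems~\ref{main2} and~\ref{cent} are being applied both to $\FSym(X)$ (to get $\Zeta=\C$) and to $G$ (to conclude $G=\FSym(X)$).
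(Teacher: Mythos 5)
Your proposal is correct and follows exactly the route the paper intends: the corollary is stated as an immediate consequence of the fact that $\Zeta$ and $\C$ are algebraic invariants together with Theorems~\ref{main2} and~\ref{cent}, which is precisely the transfer argument you spell out. Your write-up merely makes explicit the one-line justification the paper gives before the corollary (including the harmless reduction to the case of infinite $X$ with $|X|\ge 3$).
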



The permutation groups $\FSym(X)$ belong to the class of groups with finite double centralizers.
We shall say that a group $G$ has {\em finite double centralizers} if for each finite subset $F\subset G$ its double centralizer $c_G(c_G(F))$ is finite. This definition implies that each group with finite double centralizers is locally finite.

\begin{proposition}\label{p3.5} For each set $X$ the permutation group $G=\FSym(X)$ has finite double centralizers.
\end{proposition}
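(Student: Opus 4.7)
The plan is to reduce the computation of the double centralizer to two applications of Lemma~\ref{l3.3}. The finite case is trivial since then $G$ itself is finite, so I would immediately reduce to the case where $X$ is infinite.

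Given a finite subset $F\subset G=\FSym(X)$, the natural move is to consider $A=\bigcup_{f\in F}\supp(f)$, which is finite (enlarging $A$ if necessary, I can also assume $|A|\ge 3$, which does not hurt since $A$ will be allowed to be any finite set containing the supports). Then $F\subset G(A)$, and the antimonotonicity of centralizers gives $c_G(G(A))\subset c_G(F)$, and hence $c_G(c_G(F))\subset c_G(c_G(G(A)))$. So it suffices to identify these two inner and outer centralizers.

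The first identification is immediate from Lemma~\ref{l3.3}: since $|A|\ge 3$, we have $c_G(G(A))=G(X\setminus A)$. For the second, I apply the same lemma with the roles swapped: since $X$ is infinite, $|X\setminus A|\ge 3$, and Lemma~\ref{l3.3} (applied with $X\setminus A$ in the role of $A$) yields $c_G(G(X\setminus A))=G(X\setminus(X\setminus A))=G(A)$. Chaining these, $c_G(c_G(F))\subset c_G(G(X\setminus A))=G(A)$, and $G(A)=\FSym(A)$ is finite because $A$ is finite.

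No serious obstacle is expected: the whole argument is a bookkeeping of supports combined with two invocations of Lemma~\ref{l3.3}. The only point requiring minor care is making sure the cardinality hypothesis $|\cdot|\ge 3$ is satisfied in both applications, which is handled by enlarging $A$ at the outset and by appealing to the infinitude of $X$ for $X\setminus A$.
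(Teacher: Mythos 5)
Your proposal is correct and follows essentially the same route as the paper: enlarge the union of supports to a finite set $A$ with $|A|\ge 3$, note $F\subset G(A)$, and apply Lemma~\ref{l3.3} twice (to $A$ and to $X\setminus A$) to get $c_G(c_G(F))\subset c_G(c_G(G(A)))=G(A)$, which is finite. The care you take with the antimonotonicity of centralizers and the $|\cdot|\ge 3$ hypotheses matches the paper's argument exactly.
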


\begin{proof} This proposition is trivial if $X$ is finite. So, we assume that $X$ is infinite. We need to show that for each finite subset $F\subset G$ its double centralizer $c_G(c_G(F))$ in $G$ is finite. Choose any finite subset $A\supset X$ of cardinality $|A|\ge 3$, which contains the (finite) support $\supp(f)$ of each permutation $f\in F$. The subgroup $G(A)=\{f\in G:\supp(f)\subset A\}$ is finite and contains the finite subset $F$. By Lemma~\ref{l3.3}, $G(X\setminus A)=c_G(G(A))\supset c_G(F)$. Since $|X\setminus A|\ge 3$, we can apply Lemma~\ref{l3.3} once more and conclude that $$G(A)=c_G\big(G(X\setminus A)\big)=c_G\big(c_G(G(A))\big)\supset c_G(c_G(F)),$$
which implies that the double centralizer $c_G(c_G(F))$ of $F$ is finite.
\end{proof}

We are interested in groups with finite double centralizers because of the following their property.

\begin{proposition}\label{p3.6} For any infinite group $G$ with finite double centralizers the centralizer topology $\C$ is not discrete.
\end{proposition}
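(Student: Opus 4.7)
The plan is to argue by contradiction using the basic duality between a finite set and its double centralizer.

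First I would recall that the centralizer topology $\C$ has a neighborhood basis at $1_G$ consisting of the subgroups $c_G(F)$, where $F$ ranges over finite subsets of $G$ (as noted earlier in the paper). So if $\C$ were discrete, the singleton $\{1_G\}$ would be $\C$-open, and hence some basic neighborhood $c_G(F)$ with $F\subset G$ finite would satisfy $c_G(F)=\{1_G\}$.

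Next I would apply the centralizer operation once more. Since every element of $G$ commutes with $1_G$, we have $c_G(\{1_G\})=G$, and therefore
$$c_G(c_G(F))=c_G(\{1_G\})=G.$$
But the hypothesis that $G$ has finite double centralizers says precisely that $c_G(c_G(F))$ is finite whenever $F$ is finite. This forces $G$ itself to be finite, contradicting the assumption that $G$ is infinite.

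There is really no obstacle here: the whole argument is a two-line application of the definitions, exploiting the elementary fact that the only way a centralizer of a finite set can be trivial is if its double centralizer is all of $G$. The only thing worth stating carefully is the translation between ``$\C$ discrete'' and ``$c_G(F)=\{1_G\}$ for some finite $F$'', which is immediate from the description of the neighborhood basis at the identity.
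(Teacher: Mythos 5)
Your argument is correct and is essentially identical to the paper's proof: both pass from discreteness of $\C$ to $c_G(F)=\{1_G\}$ for some finite $F$ via the neighborhood basis of centralizers at $1_G$, and then observe that $c_G(c_G(F))=c_G(\{1_G\})=G$ is infinite, contradicting the finite double centralizer hypothesis. Nothing is missing.
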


\begin{proof} If the centralizer topology $\C$ is discrete, then the trivial subgroup $\{1_G\}$ of $G$ is $\C$-open, which means that $\{1_G\}=c_G(F)$ for some finite subset $F\subset G$. In this case the double centralizer $c_G(c_G(F))=c_G(1_G)=G$ is infinite, which is a desired contradiction.
\end{proof}

\section{Hausdorff shift-invariant topologies on the groups $\FSym(X)$}

In this section we establish some common properties of Hausdorff shift-invariant topologies on  permutation groups $G=\FSym(X)$. By Proposition~\ref{p1}(2), each Hausdorff shift-invariant topology on $G$ contains the restricted Zariski topology $\Zetap$. Our main result concerning this topology is:

\begin{theorem}\label{t4} Let $X$ be a set of cardinality $|X|\ge 3$ and $G=\FSym(X)$. Then for every $n\in\w$
\begin{enumerate}
\item the subset $S_{\le n}(X)=\{g\in G:|\supp(g)|\le n\}$ is closed in $(G,\Zetap)$;
\item the subspace $S_{=n}(X)=\{g\in G:|\supp(g)|=n\}$ of $(G,\Zetap)$ is discrete.
\end{enumerate}
\end{theorem}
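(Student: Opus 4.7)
My plan is as follows. If $X$ is finite then $G = \FSym(X)$ is finite, $\Zetap$ is $T_1$ by Lemma~\ref{l1} hence discrete, and both claims are immediate, so from now on I assume $X$ is infinite. Throughout I use the description of sub-basic $\Zetap$-closed sets as fibres $\gamma_b^{-1}(c) = \{h \in G : hbh^{-1} = c\}$ where $b$ is an involution and $c$ is a conjugate of $b$, together with the identity $\gamma_{t_{p,q}}(h) = t_{h(p),h(q)}$, so that $c_G(t_{p,q}) = \{h : h(\{p,q\}) = \{p,q\}\}$ is itself the sub-basic closed set $\gamma_{t_{p,q}}^{-1}(t_{p,q})$. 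Lemma~\ref{notcom} pinpoints when a transposition fails to commute with a given permutation.

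For part~(1), I prove $S_{\le n}(X)$ closed by exhibiting, for each $g$ with $|\supp(g)| \ge n+1$, a basic $\Zetap$-open neighbourhood of $g$ contained in $\{h : |\supp(h)| \ge n+1\}$. Choose $x_1, \dots, x_{n+1} \in \supp(g)$, and use that $X \setminus \supp(g)$ is infinite to pick fresh points $y_1, \dots, y_{n+1} \in X \setminus \supp(g)$ so that the pairs $\{x_i, y_i\}$ are pairwise disjoint. By Lemma~\ref{notcom}, $g$ fails to commute with each $t_{x_i, y_i}$, so $g$ lies in every sub-basic open set $U_i = \{h : h t_{x_i, y_i} h^{-1} \ne t_{x_i, y_i}\}$. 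Any $h \in \bigcap_{i=1}^{n+1} U_i$ fails to preserve each pair $\{x_i, y_i\}$ setwise, so $\supp(h)$ meets every such pair; pairwise disjointness then forces $|\supp(h)| \ge n+1$.

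For part~(2), the cases $n \le 1$ are trivial (as $S_{=0}(X)$ is a singleton and $S_{=1}(X)$ is empty), so fix $n \ge 2$, $g \in S_{=n}(X)$, and $S := \supp(g) = \{x_1, \dots, x_n\}$. The goal is to cover $S_{=n}(X) \setminus \{g\}$ by finitely many sub-basic $\Zetap$-closed sets all missing $g$; the complement in $G$ then furnishes the required isolating neighbourhood. I split the complement into type~(a), $\{h : \supp(h) \ne S\}$, where $|\supp(h)| = n = |S|$ forces some $s \in S$ with $h(s) = s$, and type~(b), $\{h : \supp(h) = S,\ h \ne g\}$, where $h$ is the identity on $X \setminus S$ and acts on $S$ differently from $g$. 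For type~(a), fix an $(n+1)$-element set $Y \subset X \setminus S$; since $|\supp(h) \cap Y| \le n < |Y|$, some $y \in Y$ is fixed by $h$, so $h$ lies in $c_G(t_{s, y}) = \gamma_{t_{s,y}}^{-1}(t_{s,y})$ for that specific $(s, y)$, and $g \notin c_G(t_{s, y})$ because $g(s) \ne s$ while $g(y) = y$ give $g(\{s, y\}) \ne \{s, y\}$; this yields $n(n+1)$ covering sets. For type~(b), fix any $w^* \in X \setminus S$; since $h$ fixes $w^*$ and $h(x_i) \in S$ for every $i$, and some $i$ satisfies $z := h(x_i) \in S \setminus\{g(x_i)\}$, we get $h \in \gamma_{t_{x_i, w^*}}^{-1}(t_{z, w^*})$, and this sub-basic closed set misses $g$ because $g t_{x_i, w^*} g^{-1} = t_{g(x_i), w^*} \ne t_{z, w^*}$; this gives a further $n(n-1)$ sets.

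The main obstacle is part~(2): because $\Zetap$ is generated only by inequality conditions, isolating $g$ inside $S_{=n}(X)$ forces one to cover the infinite set $S_{=n}(X) \setminus \{g\}$ by finitely many equality-type sub-basic closed sets, which is only possible thanks to two strong rigidity features of $\FSym(X)$ exploited above: a type~(a) permutation must share a fixed point with any sufficiently large auxiliary set $Y \subset X \setminus S$ by a pure cardinality count, and every type~(b) permutation fixes $X \setminus S$ pointwise so a single witness point $w^* \in X \setminus S$ works uniformly for all of them. Producing these uniform witnesses is where the discreteness really lives.
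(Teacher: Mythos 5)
Your proof is correct. Part (1) is essentially the paper's argument: both proofs surround a permutation $g$ with $|\supp(g)|>n$ by finitely many sub-basic open sets of the form $\{h\in G: hth^{-1}\ne t\}$ for transpositions $t$ that do not commute with $g$ (via Lemma~\ref{notcom}), and then use a pigeonhole count to show that any $h$ with $|\supp(h)|\le n$ must commute with at least one of these transpositions; your choice of $n+1$ transpositions with pairwise disjoint supports is a little more economical than the paper's grid $t_{a,\alpha(a,k)}$, $a\in\supp(g)$, $0\le k\le n$, but the mechanism is identical. Part (2) is where you genuinely diverge. The paper first proves (Lemma~\ref{l5}) that the condition $x\in\supp(h)$ is relatively open on $S_{\le n}(X)$ --- again using only centralizer-type sets $\{h:hth^{-1}\ne t\}$ --- deduces that $U=\{f\in S_{=n}(X):\supp(f)=\supp(g)\}$ is a \emph{finite} relatively open neighborhood of $g$ contained in $G(\supp(g))$, and then concludes that $g$ is isolated from the $T_1$ property of $\Zetap$ (Lemma~\ref{l1}). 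You instead isolate $g$ in a single step by covering $S_{=n}(X)\setminus\{g\}$ with finitely many sub-basic $\Zetap$-closed sets missing $g$: your type~(a) cover is exactly the complement, inside $S_{=n}(X)$, of the paper's neighborhood $U$, but your type~(b) cover --- which separates $g$ from the finitely many other permutations supported precisely on $\supp(g)$ --- uses the equality sets $\{h: h t_{x_i,w^*}h^{-1}=t_{z,w^*}\}$ with $z\ne g(x_i)$, i.e.\ sub-basic closed sets $\{h:hbh^{-1}=aba^{-1}\}$ with $a\ne 1_G$, a part of the sub-base the paper's proof never invokes. What your version buys is an explicit basic $\Zetap$-open set witnessing that $g$ is isolated in $S_{=n}(X)$, with no appeal to the $T_1$ axiom; what the paper's version buys is brevity and the reusable Lemma~\ref{l5}.
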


\begin{proof} By Lemma~\ref{l1}, the topology $\Zetap$ on the group $G$ satisfies the separation axiom $T_1$. If the set $X$ is finite, then the topology $\Zetap$ is discrete, being a $T_1$-topology on the finite group $G=\FSym(X)$. So, we assume that $X$ is infinite.
\smallskip

\begin{lemma}\label{l5} For every $x\in X$ the set $U=\{g\in S_{\le n}(X):x\in\supp(g)\}$ is relatively $\Zetap$-open in $S_{\le n}(X)$.
\end{lemma}

\begin{proof} Fix any permutation $g\in U$. It follows that $g(x)\ne x\in \supp(g)$.
Since $X$ is infinite and $|\supp(g)|\le n$, we can choose a subset $A\subset X\setminus \supp(g)$ of cardinality $|A|=n+1$. For each point $a\in A$ consider the transposition $t_{x,a}$ with $\supp(t_{x,a})=\{x,a\}$ and observe that $t_{x,a}\circ g\ne g\circ t_{x,a}$ by Lemma~\ref{notcom}. Then
$$O_g=\bigcap_{a\in A}\{f\in G:f\circ t_{x,a}\circ f^{-1}\ne t_{x,a}\}$$is an $\Zetap$-open neighborhood of $g$. We claim $O_g\cap S_{\le n}(X)\subset U$. This inclusion will follow as soon as for each  permutation $f\in O_g\cap S_{\le n}(X)$ we show that $x\in\supp(f)$. Assume conversely that $x\notin\supp(f)$. Since $|\supp(f)|\le n<|A|$, there is a point $a\in A\setminus\supp(f)$. Then the support $\supp(f)$ is disjoint with the support $\{x,a\}$ of the transposition $t_{x,a}$, which implies that $f$ commutes with $t_{x,a}$. But this contradicts the choice of $f\in O_g$.
\end{proof}

Now we can finish the proof of Theorem~\ref{t4}.
\smallskip

1. To show that the subset $S_{\le n}(X)$ is $\Zetap$-closed, fix any permutation $g\in \FSym(X)\setminus S_{\le n}(X)$. We need to find a neighborhood $O_g\in\Zetap$ of $g$ with $O_g\cap S_{\le n}(X)=\emptyset$. Consider the support $A=\supp(g)$, which is a finite set of cardinality $|A|>n$. The infinity of the set $X$ allows us to chose an injective map $\alpha:A\times \{0,\dots,n\}\to X\setminus A$. Now for each point $a\in A$ and $k\in\{0,1,\dots,n\}$ consider the transposition $t=t_{a,\alpha(a,k)}$ with $\supp(t)=\{a,\alpha(a,k)\}$ and observe that $g\circ t\ne t\circ g$ by Lemma~\ref{notcom}.
So, $$O_g=\bigcap_{i=0}^n\bigcap_{a\in A}\{f\in G:f\circ t_{a,\alpha(a,k)}\circ f^{-1}\ne t_{a,\alpha(a,k)}\}$$ is a $\Zetap$-open neighborhood of $g$. We claim that $O_g\cap S_{\le n}(X)=\emptyset$. Assume for a contradiction that this intersection contains some permutation $f$. Since $|\supp(f)|\le n<|\supp(g)|=|A|$, we can choose a point $a\in A\setminus\supp(f)$ and then choose  a number $k\in\{0,\dots,n\}$ such that $\alpha(a,k)\notin\supp(f)$. Now consider the transposition $t=t_{a,\alpha(a,k)}$ and observe that $t$ commutes with $f$ as $\supp(t)=\{a,\alpha(a,k)\}$ is disjoint with $\supp(f)$. But this contradicts the choice of $f\in O_g$.
\smallskip

2. To show that the subspace $S_{=n}(X)$ of the $T_1$-space $(G,\Zetap)$ is discrete, fix any permutation $g\in S_{=n}(X)$ with finite support $A=\supp(g)$ of cardinality $|A|=n$. Lemma~\ref{l5} implies that the set $$U=\{f\in S_{=n}(X):A\subset \supp(f)\}=\{f\in S_{=n}(A):\supp(g)= \supp(f)\}\subset G(A)$$ is relatively $\Zetap$-open in $S_{=n}(X)$ and is finite, being a subset of the finite subgroup $G(A)$. So, $g$ has finite neighborhood  $U\subset G(A)$ in $S_{=n}(X)$ and hence $g$ is an isolated point of the space $S_{=n}(X)$, which means that the space $S_{=n}(X)$ is discrete.
\end{proof}

Let us remind that a topological space $(X,\tau)$ is called {\em $\sigma$-discrete} if $X$ can be written as a countable union $X=\bigcup_{n\in\w}X_n$ of discrete subspaces of $X$. According to this definition, each countable topological space is $\sigma$-discrete. Since $\FSym(X)=\bigcup_{n\in\w}S_{=n}(X)$, Theorem~\ref{t4}(2) and Proposition~\ref{p1}(1) imply:

\begin{corollary}\label{c2} For any set $X$, the group $G=\FSym(X)$ is $\sigma$-discrete in the restricted Zariski topology $\Zetap$, and consequently $G$ is $\sigma$-discrete in each Hausdorff shift-invariant topology on $G$.
\end{corollary}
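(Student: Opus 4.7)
The plan is to assemble the corollary from two ingredients already established in the excerpt, together with the elementary fact that discreteness of a subspace is preserved under any refinement of the ambient topology.

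First I would observe the obvious decomposition $\FSym(X)=\bigcup_{n\in\w}S_{=n}(X)$, since every finitely supported permutation belongs to exactly one piece indexed by the cardinality of its support. By Theorem~\ref{t4}(2) each piece $S_{=n}(X)$ is discrete as a subspace of $(G,\Zetap)$, so this decomposition displays $(G,\Zetap)$ as a countable union of discrete subspaces. By the definition of $\sigma$-discreteness reminded in the excerpt, this settles the first assertion of the corollary.

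For the second assertion, I would let $\tau$ be an arbitrary Hausdorff shift-invariant topology on $G$. Shift-invariance says that every two-sided shift $x\mapsto axb$ is $\tau$-continuous; specialising $b=1_G$ and $a=1_G$ yields separate $\tau$-continuity of the group operation, so $(G,\tau)$ is a semi-topological group. Together with the Hausdorff hypothesis this places $\tau$ squarely in the hypothesis of Proposition~\ref{p1}(2), which gives the key inclusion $\Zetap\subset\tau$.

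The remaining step is a one-line observation: if $A\subset G$ is discrete in the subspace topology inherited from $\Zetap$, then each point $a\in A$ has a relatively open singleton coming from a $\Zetap$-open set $U$ with $U\cap A=\{a\}$; since $\Zetap\subset\tau$, this $U$ is automatically $\tau$-open, so $A$ remains discrete in the subspace topology inherited from $\tau$. Applying this to each $S_{=n}(X)$ exhibits $(G,\tau)$ as a countable union of discrete subspaces, proving $\sigma$-discreteness. There is no genuine obstacle here, as the deep work has been done in Theorem~\ref{t4} and Proposition~\ref{p1}; the only point worth flagging is that the inclusion must run in the direction $\Zetap\subset\tau$, because discreteness of a subspace is inherited by refinements rather than by coarsenings of the ambient topology.
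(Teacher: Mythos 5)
Your proof is correct and follows essentially the same route as the paper: the decomposition $\FSym(X)=\bigcup_{n\in\w}S_{=n}(X)$ combined with Theorem~\ref{t4}(2) for the first claim, and Proposition~\ref{p1}(2) (shift-invariance giving separate continuity, hence $\Zetap\subset\tau$ for Hausdorff $\tau$) plus the observation that discreteness of subspaces persists under refinement for the second. The extra detail you supply --- why shift-invariance yields a semi-topological group and why refinements preserve discreteness --- is exactly what the paper leaves implicit.
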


\begin{remark} Corollary~\ref{c2}  answers a problem posed in \cite{GGRC}. In \cite{BG} this corollary is generalized to so-called perfectly supportable semigroups.
\end{remark}

\section{The topologies $\Tau_\alpha$ and $\Tau_\beta$ on the symmetric group $\Sym(X)$}

It is well known that each infinite discrete topological space $X$ has two natural compactifications: the Aleksandrov (one-point) compactification $\alpha X=X\cup\{\infty\}$ and the Stone-\v Cech compactification $\beta X$. The compactifications $\alpha X$ and $\beta X$ are the smallest and the largest compactifications of $X$, respectively (see \cite[\S3.5]{En}).

Each permutation $f:X\to X$ uniquely extends to homeomorphisms $f_\alpha:\alpha  X\to\alpha X$ and $f_\beta:\beta X\to\beta X$. Conversely, each homeomorphism $f$ of the compactification $\alpha X$ or $\beta X$ determines a permutation $f|X$ of the set $X$. So, the symmetric group $S(X)$ of $X$ is algebraically isomorphic to the homeomorphism groups $\Homeo(\alpha X)$ and $\Homeo(\beta X)$ of the compactifications $\alpha X$ and $\beta X$.

It is well known that for each compact Hausdorff space $K$ its homeomorphism group $\Homeo(K)$ endowed with the compact-open topology is a topological group. If the compact space $K$ is zero-dimensional, then the compact-open topology on $\Homeo(K)$ is generated by the base consisting of the sets $$N(f,\U)=\bigcap_{U\in\U}\{g\in \Homeo(K):g(U)=f(U)\}$$ where $f\in\Homeo(K)$ and $\U$ runs over finite disjoint open covers of $K$.

The identification of $S(X)$ with the homeomorphism groups $\Homeo(\alpha X)$ and $\Homeo(\beta X)$ yields  two Hausdorff group topologies on $S(X)$ denoted by $\Tau_\alpha$ and $\Tau_\beta$, respectively.

Taking into account that each disjoint open cover of the Aleksandrov compactification $\alpha X$ can be refined by a  cover $\big\{\alpha X\setminus F\big\}\cup\big\{\{x\}:x\in F\big\}$ for some finite subset $F\subset X$, we see that the topology $\Tau_\alpha$ on $S(X)=\Homeo(\alpha X)$ coincides with the topology of pointwise convergence $\Tau_p$. The topology $\Tau_\beta$ on the symmetric group $S(X)=\Homeo(\beta X)$ is strictly stronger than $\Tau_p=\Tau_\alpha$. Its neighborhood base at the neutral element $\mathbf 1$ of $\Sym(X)$ consists of the sets $N(\mathbf 1,\U)=\bigcap_{U\in\U}\{f\in \Sym(X):f(U)=U\}$, where $\U$ runs over all finite disjoint covers of $X$.

\begin{theorem}\label{quot} The normal subgroup $\FSym(X)$ is closed and nowhere dense in the topological group $(S(X),\Tau_\beta)$.
\end{theorem}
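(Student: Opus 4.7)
My plan is to treat closedness and empty interior separately; throughout I assume $X$ infinite, since otherwise $\FSym(X)=\Sym(X)$ and the statement is vacuous. I will work with the explicit neighborhood base of $\Tau_\beta$ recalled just above the theorem: the sets $N(f,\U)=\bigcap_{U\in\U}\{h\in\Sym(X):h(U)=f(U)\}$ as $\U$ ranges over finite disjoint covers of $X$. Since $(\Sym(X),\Tau_\beta)$ is a topological group, the family $\{N(g,\U):\U\text{ finite disjoint cover of }X\}$ is a neighborhood base at each $g$.

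For the closedness of $\FSym(X)$, I would fix $g\in\Sym(X)\setminus\FSym(X)$, so that $\supp(g)$ is infinite, and exhibit a $\Tau_\beta$-neighborhood of $g$ disjoint from $\FSym(X)$. The crucial combinatorial ingredient is an \emph{infinite} set $A\subseteq\supp(g)$ with $A\cap g(A)=\emptyset$. To produce it, I analyze the orbits of $\langle g\rangle$ on $\supp(g)$: each has cardinality $\ge 2$, so if there are infinitely many orbits I pick one representative from each (the $g$-image of a representative stays in its own orbit, hence outside $A$), while if only finitely many orbits exist, at least one is an infinite (thus bi-infinite) cycle $\{g^k(a):k\in\IZ\}$, from which I take $A=\{g^{2k}(a):k\in\IZ\}$. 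With such an $A$ in hand, the finite disjoint cover $\U=\{A,\,g(A),\,X\setminus(A\cup g(A))\}$ (with the last block discarded if empty) yields the basic neighborhood $N(g,\U)$ of $g$; every $f\in N(g,\U)$ satisfies $f(A)=g(A)\subseteq X\setminus A$, so $A\subseteq\supp(f)$, forcing $f\notin\FSym(X)$.

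For nowhere density, since $\FSym(X)$ is a subgroup of the topological group $(\Sym(X),\Tau_\beta)$, I only need to show that $\mathbf{1}$ is not in its interior. Given any finite disjoint cover $\U$ of $X$, the infinity of $X$ together with $|\U|<\w$ forces some $U_0\in\U$ to be infinite; I then take any permutation of $U_0$ with infinite support, extend by the identity outside $U_0$, and obtain an element of $N(\mathbf{1},\U)\setminus\FSym(X)$. Hence no basic neighborhood of $\mathbf{1}$ sits inside $\FSym(X)$, and combined with closedness this gives nowhere density.

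I expect the orbit-theoretic step producing $A$ with $A\cap g(A)=\emptyset$ to be the only place with real content; the remainder is a direct unpacking of the neighborhood base of $\Tau_\beta$ together with the elementary observation that a permutation sending an infinite set off itself must have infinite support.
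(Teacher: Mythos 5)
Your proof is correct and follows essentially the same route as the paper: for closedness you isolate an infinite set $A\subseteq\supp(g)$ with $A\cap g(A)=\emptyset$ and use the associated finite disjoint cover, and for nowhere density you place a permutation with infinite support inside an infinite block of an arbitrary finite cover. The only difference is that you spell out (correctly, via the orbit decomposition of $\langle g\rangle$ on $\supp(g)$) the existence of the infinite set moved off itself, which the paper simply asserts.
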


\begin{proof} Given any permutation $f\in\Sym(X)\setminus\FSym(X)$ with infinite support $\supp(f)$, we can find an infinite subset $U\subset X$ such that $f(U)\cap U=\emptyset$. This set determines the cover $\U=\{U,X\setminus U\}$ and the $\Tau_\beta$-open neighborhood $N(f,\U)=\{g\in \Sym(X):g(U)=f(U)\}$ of $f$ which is disjoint with the subgroup $\FSym(X)$ because each permutation $g\in N(f,\U)$ has infinite support $\supp(g)\supset U$. So, the subgroup $\FSym(X)$ is closed in $(S(X),\Tau_\beta)$.

To see that $\FSym(X)$ is nowhere dense in $(S(X),\Tau_\beta)$, choose any finite cover $\U$ of $X$ and consider the basic neighborhood $N(\mathbf 1,\U)=\bigcap_{U\in\U}\{f\in S(X):f(U)=U\}\in\Tau_\beta$ of the identity element $\mathbf 1$. Since $X$ is infinite, some set $U\in\U$ is also infinite. Then we can choose a permutation $f:X\to X$ with infinite support $\supp(f)=U$. This permutation belongs to the neighborhood $N(\mathbf 1,\U)$ and witnesses that the closed subgroup $\FSym(X)$ is nowhere dense in the topological group $(S(X),\Tau_\beta)$.
\end{proof}

\begin{remark} Theorem~\ref{quot} implies that the quotient group $\Sym(X)/\FSym(X)$ admits a non-discrete Hausdorff group topology. This answers negatively Question 5.27 posed in \cite{BD}. Observe that the quotient group $\Sym(X)/\FSym(X)$ of the topological group $(\Sym(X),\Tau_\beta)=\Homeo(\beta X)$ can be naturally embedded in the homeomorphism group $\Homeo(\beta X\setminus X)$ of the remainder $\beta X\setminus X$ of the Stone-\v Cech compactification of $X$. The question if the groups $\Sym(\IZ)/\FSym(\IZ)$ and $\Homeo(\beta \IZ\setminus \IZ)$ are equal is non-trivial and cannot be resolved in ZFC; see \cite{Vel}.
\end{remark}

\begin{remark} Since for an infinite set $X$ the topology $\Tau_\beta$ on the symmetric group $\Sym(X)$ is strictly stronger than the topology $\Tau_\alpha=\Tau_p$, the homeomorphism group $\Homeo(\beta X)=(S(X),\Tau_\beta)$ is not minimal, in contrast to the homeomorphism group $\Homeo(\alpha X)=(\Sym(X),\Tau_p)$ which is minimal according to \cite{Gau}. This resembles the situation with the homeomorphism groups $\Homeo(\II^n)$ and $\Homeo(\mu^n)$ of the cubes $\II^n=[0,1]^n$ and the Menger cubes $\mu^n$ of finite dimension $n$. By a result of Gamarnik \cite{Gamarnik}, the homeomorphism group $\Homeo(\II^{n})$ is minimal if and only if $n\le 1$. By a recent result of van Mill \cite{vMill}, the homeomorphism group  $\Homeo(\mu^n)$ is minimal if and only if $n=0$. Let us observe that the zero-dimensional Menger cube $\mu^0$ is the standard ternary Cantor set, homeomorphic to the Cantor cube $\{0,1\}^\w$. The minimality of the homeomorphism group $\Homeo(\{0,1\}^\w)$ was proved by Gamarnik \cite{Gamarnik}. The problem of minimality of the homeomorphism group $\Homeo([0,1]^\w)$ of the Hilbert cube $[0,1]^\w$ is still open (cf. \cite[VI.8]{Ar87}, \cite[3.3.3(b)]{CHR}, \cite{Us08}).
\end{remark}

\section{Some Open Problems}

It is known \cite{Kal}, \cite{KR}, \cite{RS} that for a countable set $X$ the topology of pointwise convergence $\Tau_p=\Tau_\alpha$ on the permutation group $\Sym(X)$ is a unique $\w$-bounded Hausdorff group topology on $\Sym(X)$. Let us recall \cite{Gur} that a topological group $G$ is {\em $\w$-bounded} if for each non-empty open set $U\subset G$ there is a countable subset $F\subset G$ with $F\cdot U=G$. So, the topology $\Tau_p=\Tau_\alpha$ is simultaneously minimal and maximal in the class of $\w$-bounded groups topologies on $\Sym(\IZ)$.

\begin{problem} Has the topology $\Tau_\beta$ on the symmetric group $\Sym(\IZ)$ some extremal properties? In particular, is the quotient topology on the quotient group $\Sym(\IZ)/\FSym(\IZ)$ of the topological group $(\Sym(\IZ),\Tau_\beta)$ minimal? Is it a unique non-discrete Hausdorff group topology on $\Sym(\IZ)/\FSym(\IZ)$?
\end{problem}

Theorem~\ref{cent} motivates the following problem.

\begin{problem} Find a characterization of groups $G$ such that $\Zetapp=\Zeta=\M=\C$.
\end{problem}

By Proposition~\ref{p3.6}, for each infinite group $G$ with finite double centralizers the centralizer topology $\C$ is not discrete. By Proposition~\ref{p3.5} and Theorem~\ref{cent}, for each infinite set $X$ the permutation group $G=\FSym(X)$ has finite double centralizers and satisfies the equality $\M=\C$. This motivates the following problem.

\begin{problem} Is $\M=\C$ for each (countable) group $G$ with trivial center and finite double centralizers?
\end{problem}

By the classical result of Markov \cite{Mar}, a countable group $G$ admits a non-discrete Hausdorff group topology if and only if its Zariski topology $\Zeta$ is not discrete. So, for the non-topologizable groups constructed in \cite{Ol80}, \cite{KT} the Zariski topology $\Zeta$ is discrete. On the other hand, by \cite{Zel}, \cite{Zel07}, each infinite group $G$ admits a non-discrete Hausdorff topology turning it into a quasi-topological group, which implies that the restricted Zariski topology $\Zetap$ never is discrete.

\begin{problem} Is the restricted Zariski topology $\Zetapp$ discrete for some infinite group $G$?
\end{problem}

An affirmative answer to this problem implies a negative answer to the following related problem.

\begin{problem} Does each (countable) group $G$ admit a non-discrete Hausdorff  topology turning $G$ into a \textup{[}quasi\textup{]}-topological group? A \textup{[}semi\textup{]}-topological group?
\end{problem}

\section{Acknowledgement}

The authors express their sincere thanks to Dmitri Shakhmatov, Dikran Dikranjan and the anonymous referee for careful reading of the manuscript and numerous remarks and suggestions, which substantially changed the content of the paper, improved its readability and enriched the list of references.

\end{document}